\def\b{\beta}
\def\a{\alpha}
\newcommand\Cay{\mathrm{Cay}}
\newcommand\V{\varepsilon}
\newtheorem{theorem}{Theorem}[section]
\newtheorem{lemma}{Lemma}[section]
\newtheorem{remark}{Remark}[section]
\newtheorem{prop}{Proposition}[section]
\title{\bf \Large  On the number of  spanning trees of bicirculant graphs \footnote{  E-mail addresses: yjlisy@163.com (J. Yang, corresponding author), 
		18187166242@163.com (F. Xian).}}
\author{
	{\small  Jing Yang$\dagger$, Fangming Xian}\\[2mm]
	\small  School of Statistics and Mathematics, Yunnan University of Finance and Economics \\
	\small   Kunming, Yunnan, 650000, China.\\}
\begin{document}
\maketitle
\begin{abstract}
 A bi-Cayley graph over a cyclic group $\mathbb{Z}_n$ is called a bicirculant graph. Let
 $\Gamma=BC(\mathbb{Z}_n; R,T,S)$ be a bicirculant graph with $R=R^{-1}\subseteq \mathbb{Z}_n\setminus \{0\}$ and $T=T^{-1}\subseteq \mathbb{Z}_n\setminus \{0\}$ and $S\subseteq \mathbb{Z}_n$. In this paper, using Chebyshev polynomials, we obtain a closed formula
  for the number of spanning trees of bicirculant graph $\Gamma$, investigate some arithmetic properties of the number of  spanning trees of $\Gamma$, and find its asymptotic behaviour as $n$ tends infinity. In addition,
  we show that $F(x)=\sum_{n=1}^{\infty}\tau(\Gamma)x^n$ is a rational function with integer coefficients.\\

\noindent{\bf AMS classification}: 05C50; 05C25\\

\noindent {\bf Keywords}: Bicirculant graph;  Spanning tree; Chebyshev polynomial
\end{abstract}

\section{Introduction}
We only consider simple undirected graphs in this paper. Let $\Gamma=(V(\Gamma),E(\Gamma))$ be a  graph with vertex set $V(\Gamma)$ and edge set $E(\Gamma)$. The \textit{adjacency matrix} of $\Gamma$ is a $V(\Gamma)\times V(\Gamma)$ matrix where its rows
and columns indexed by the vertex set of $\Gamma$ and its $(u, v)$-entry is $1$ if the vertices $u$ and $v$ are adjacent
and $0$ otherwise. By eigenvalues of $\Gamma$ we mean the eigenvalues of its adjacency matrix.
The \textit{Laplacian matrix} of $\Gamma$ is $L(\Gamma) := D(\Gamma)-A(\Gamma)$
where $A(\Gamma)$ is the adjacency matrix of $\Gamma$, and
$D(\Gamma)=\operatorname{diag}(d_{1},\dots ,d_{n})$ with $d_{i}$ denoting the degree of the vertex $i$. Since $L(\Gamma)$ is symmetric and positive semidefinite, its eigenvalues are real and nonnegative
and can be ordered as $0 = \lambda_{1} \le \lambda_{2} \le \dots \le \lambda_{|V(\Gamma)|}$.

A \textit{tree} is a connected undirected graph without cycles. A \textit{spanning tree} in a graph $\Gamma$ is a subgraph that is a tree containing all the vertices of $\Gamma$. The \textit{complexity} of a finite connected graph $\Gamma$, denoted by $\tau(\Gamma)$, is the number of spanning trees of $\Gamma$. The number of spanning trees in a graph is an important invariant, and it also serves as a crucial indicator of network reliability. A classic result known as Kirchhoff’s Matrix Tree Theorem\cite{tree} states that the number of spanning trees
in a graph can be expressed as the product of its non-zero Laplacian eigenvalues divided
by the number of vertices. In practice, though,
this method of counting spanning trees by the product of eigenvalues of the
Laplacian matrix is infeasible for large graphs. For some special classes of graphs, it is possible to give explicit, simple formulae for the number of spanning trees, one can see
\cite{AlmostM1,AlmostM2,M1,wheel, fan,prism,ladder,ladder1,latt,ati,ati1,sie1,grid}.

Cayley graph is a class of highly symmetric graphs. In algebraic
graph theory, the study of Cayley graphs over finite groups is one of the prosperous topics and
attracts tremendous amount of attention from the literature. Let $G$ be a group and $S$ be a subset of $G$ such that $S^{-1}= S$ and $1\notin S$, the Cayley graph $\Gamma=\mathrm{Cay}(G, S)$ over
$G$ with respect to $S$ is a graph with $V(\Gamma)=G$ and 
$E(\Gamma) = \{{g, sg}~|~ g\in G, s\in S\}$. Let $R$, $L$ and $S$ be subsets of a group $G$ such that $R=R^{-1}$, $T=T^{-1}$ and  
$R\cup T$ does not contain the identity element of $G$.  
Define the bi-Cayley graph $\Gamma=BC(G;R,T,S)$ to have vertex set the union of the right part  
$G_0=\{h_0\mid h\in G\}$ and the left part  $G_1=\{h_1\mid h\in G\}$,
and edge set the union of the right edges  
$\{\{h_0,g_0\}\mid gh^{-1}\in R\},$ 
the left edges  
$\{\{h_1,g_1\}\mid gh^{-1}\in T\},$  
and the spokes  
$\{\{h_0,g_1\}\mid gh^{-1}\in S\}$. 
Clearly, $|S|>0$ if bi-Cayley graph $\Gamma$ is connected.
A Cayley (resp. bi-Cayley) graph on
a cyclic group is called a \textit{circulant graph} (resp. \textit{bicirculant graph}).

Let
$$
P(x)=a_0+a_1x+\cdots+a_sx^s=a_s \prod_{k=1}^s\left(x-\alpha_k\right)
$$
 be a nonconstant polynomial with complex coefficients.
The  \textit{Mahler measure} \cite{mah} of $P(x)$ is defined to be
\begin{equation}
M(P):=\exp \left(\int_0^1 \log \left|P\left(e^{2 \pi\mathbf{i} t}\right)\right| d t\right).
\end{equation}
Actually, an alternative form of $M(P)$ has appeared in \cite{leh}.
 That is,
\begin{equation}
M(P)=\left|a_s\right| \prod_{\left|\alpha_i\right|>1}\left|\alpha_i\right|.
\end{equation}
The concept of Mahler measure can be naturally extended to the class of \textit{Laurent polynomials}
$$P(x)=a_0 x^t+a_1 x^{t+1}+\cdots+a_{s-1} x^{t+s-1}+a_s x^{t+s}=a_s x^t \prod_{i=1}^s\left(x-\alpha_i\right),$$
 where $a_s \neq 0$, and $t$ is an arbitrary but not necessarily positive integer.

Recently, Mednykh et al. \cite{MMA1,med} developed a new method to produce explicit formulas for the number of spanning trees of
circulant graphs in light of Chebyshev polynomials. Moreover, they investigated their arithmetic properties and asymptotic behaviour.
Finally, they considered the generating function for the number of spanning trees of circulant graphs.
Using the techniques developed in \cite{MMA1,med}, a similar work by Hua et al. \cite{hua} for Cayley graph over dihedral group.

Inspired by the work of Mednykh et al. \cite{MMA1,med,hua}, in this paper, we obtain a closed formula for the number of spanning
trees of bicirculant graphs, denoted by $\tau(\Gamma)$, in light of Chebyshev polynomials, and investigate their arithmetic properties and asymptotic
behaviour. Moreover, the generating function for the number of spanning trees of bicirculant graphs is considered.

The structure of the paper is as follows. In Section 2, some basic definitions and preliminary results are given.
In Section 3, using Chebyshev polynomials, we present explicit formulas for the number of spanning trees of bicirculant graphs.
 In Section 4, we provide some arithmetic properties of the number of spanning trees. 
 In Section 5, we use explicit formulas for the complexity in order to produce its asymptotic in light of Mahler measure of the associated
polynomials. In Section 6, we show that $F(x)=\sum_{n=1}^{\infty}\tau(\Gamma)x^n$ is a rational function with integer coefficients satisfying some symmetry property. In Section 7, we illustrate our results by a series of examples.
\section{Preliminaries}
In this section, we list some definitions and known results which are used in this paper.
First, we introduce the definition of the resultant of two polynomials.
Let $f(x)=a_0x^n+a_1x^{n-1}+\cdots+ a_n$ and $g(x)=b_0x^m+b_1x^{m-1}+\cdots+ b_m$ be two polynomials with $a_i,b_j\in \mathbb{R}$.
Then the \textit{resultant} of $f(x)$ and $g(x)$ is
$$
\mathrm{Res}(f(x), g(x))=\det\left(\begin{array}{ccccccccc}a_0 & a_1 & a_2 & \cdots & \cdots & a_n & 0 & \cdots & 0 \\ 0 & a_0 & a_1 & \cdots & \cdots & a_{n-1} & a_n & \cdots & 0 \\ 0 & 0 & a_0 & \cdots & \cdots & a_{n-2} & a_{n-1} & \cdots & 0 \\ \vdots & \vdots & \vdots & \vdots & \vdots & \vdots & \vdots & \vdots & \vdots \\ 0 & 0 & \cdots & 0 & a_0 & \cdots & \cdots & \cdots & a_n \\ b_0 & b_1 & b_2 & \cdots & \cdots & \cdots & b_m & \cdots & 0 \\ 0 & b_0 & b_1 & \cdots & \cdots & \cdots & b_{m-1} & b_m & \cdots \\ \vdots & \vdots & \vdots & \vdots & \vdots & \vdots & \vdots & \vdots & \vdots \\ 0 & \cdots & 0 & b_0 & b_1 & \cdots & \cdots & \cdots & b_m\end{array}\right).
$$
\begin{lemma}\cite{book}\label{book}
Let $f(x)=a_0x^n+a_1x^{n-1}+\cdots+ a_n$ and $g(x)=b_0x^m+b_1x^{m-1}+\cdots+ b_m$ be two polynomials with $a_i,b_j\in \mathbb{R}$.
Let $x_1,x_2,\ldots,x_n$ and $z_1,z_2,\ldots,z_m$ be the roots of $f(x)=0$ and $g(x)=0$, respectively.
Then

\begin{itemize}
\item[(1)]$\mathrm{Res}(f(x), g(x))=a_0^mb_0^n\prod_{j=1}^m\prod_{i=1}^n(x_i-z_j)$,
\item[(2)] $\mathrm{Res}(f(x), g(x)+h(x)f(x))=\mathrm{Res}(f(x),g(x))$ for any polynomial $h(x)$,
\item[(3)] $\mathrm{Res}(f(x), g_1(x)g_2(x))=\mathrm{Res}(f(x), g_1(x))\cdot \mathrm{Res}(f(x), g_2(x))$, where $g(x)= g_1(x)g_2(x)$.
\end{itemize}
\end{lemma}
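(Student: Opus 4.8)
The plan is to prove part (1) in full and then read off parts (2) and (3) as immediate corollaries, since both follow from the evaluation form of the resultant that part (1) supplies. To prove (1), I would treat $x_1,\dots,x_n,z_1,\dots,z_m$ together with $a_0,b_0$ as independent indeterminates, so that $a_i=a_0(-1)^i e_i(x_1,\dots,x_n)$ and $b_j=b_0(-1)^j e_j(z_1,\dots,z_m)$ are (signed) elementary symmetric polynomials of the roots. Expanding the Sylvester determinant then exhibits $\mathrm{Res}(f,g)$ as a polynomial $\Phi$ in $a_0,b_0$ and all the roots. The decisive observation is that if $x_i=z_j$ for some pair $(i,j)$ then $f$ and $g$ share a root, the corresponding Sylvester matrix is singular, and $\Phi$ vanishes; hence each linear form $(x_i-z_j)$ divides $\Phi$, and as these $nm$ forms are pairwise coprime their product divides $\Phi$.

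The hard part is the normalization: I must show $\Phi=c\,a_0^{\mu}b_0^{\nu}\prod_{i,j}(x_i-z_j)$ with $c=1$, $\mu=m$, $\nu=n$. A homogeneity/degree count in the indeterminates bounds $\Phi$ from above so that no further factors can appear, and then I would compare one explicit monomial on each side. The convenient choice is to specialize all $x_i$ to $0$, equivalently to read off the main-diagonal term $a_0^m b_m^n$ of the Sylvester determinant; using $b_m=b_0(-1)^m\prod_j z_j$ and matching against the $x$-free term of $a_0^{\mu}b_0^{\nu}\prod_{i,j}(x_i-z_j)$ fixes $c=1$, $\mu=m$, $\nu=n$, yielding
\[
\mathrm{Res}(f,g)=a_0^m b_0^n\prod_{i=1}^n\prod_{j=1}^m(x_i-z_j).
\]
Since $b_0\prod_{j=1}^m(x_i-z_j)=g(x_i)$, this is equivalent to the evaluation formula $\mathrm{Res}(f,g)=a_0^m\prod_{i=1}^n g(x_i)$, which is the real engine for the remaining parts.

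Part (3) is then a one-line substitution: with $g=g_1g_2$ and $\deg(g_1g_2)=\deg g_1+\deg g_2$, the evaluation formula gives $\mathrm{Res}(f,g_1g_2)=a_0^{\deg g_1+\deg g_2}\prod_i g_1(x_i)g_2(x_i)$, and splitting the product across the two factors reproduces $\mathrm{Res}(f,g_1)\,\mathrm{Res}(f,g_2)$. For part (2) the key point is that values at the roots of $f$ are unaffected by adding a multiple of $f$: since $f(x_i)=0$ we have $(g+hf)(x_i)=g(x_i)$ for every $i$, so applying the evaluation formula to $g+hf$ (whose degree agrees with that of $g$ in the intended range of use, keeping the power of $a_0$ fixed) gives $\mathrm{Res}(f,g+hf)=a_0^m\prod_i(g+hf)(x_i)=a_0^m\prod_i g(x_i)=\mathrm{Res}(f,g)$.

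I expect essentially all the difficulty to sit in part (1), and within it in the normalization step, where one must rule out spurious constant factors and correctly identify the exponents of $a_0$ and $b_0$; the divisibility argument and the deductions of (2) and (3) are routine. The only subtlety worth flagging in (2) is the degree convention: if adding $hf$ raised $\deg g$, the exponent of $a_0$ would change, so the identity as stated presumes the degree of the second argument is preserved, which is exactly how it is invoked later in the paper.
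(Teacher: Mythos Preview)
The paper does not prove this lemma at all: it is quoted without proof from the reference \cite{book} (Prasolov, \emph{Polynomials}), so there is no ``paper's own proof'' to compare against. Your outline is the standard textbook argument and is essentially correct; the divisibility-plus-normalization route for (1) and the derivation of (2) and (3) from the evaluation formula $\mathrm{Res}(f,g)=a_0^m\prod_i g(x_i)$ are exactly how Prasolov and most algebra texts proceed, and you have correctly flagged the one genuine caveat in (2), namely that the identity as written tacitly assumes $\deg(g+hf)=\deg g$ so that the exponent of $a_0$ is unchanged.
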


Let $\mathbb{F}$ be a number field, and $\mathbb{F}^{m\times n}$ be the set of $m\times n$ matrices.
\begin{lemma}\label{mat}\cite{GX}
 Let $A, B, C, D\in\mathbb{F}^{n\times n}$ with $AC=CA$. Then
 $$
\det\begin{pmatrix}
 A&B\\
 C&D\
 \end{pmatrix}
=\det(AD-CB).
$$
\end{lemma}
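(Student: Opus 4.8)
The plan is to reduce to the case of invertible $A$ by a block-triangular factorization, in which the commutation hypothesis lets me eliminate $A^{-1}$, and then to remove the invertibility assumption by a polynomial-identity argument; the latter is legitimate because $\mathbb{F}$ is a number field and hence infinite.

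First I would treat the case in which $A$ is invertible. Writing $I$ for the $n \times n$ identity, I factor
$$\begin{pmatrix} A & B \\ C & D \end{pmatrix} = \begin{pmatrix} A & 0 \\ C & I \end{pmatrix}\begin{pmatrix} I & A^{-1}B \\ 0 & D - CA^{-1}B \end{pmatrix}.$$
Each factor is block triangular, so taking determinants gives $\det(A)\det(D - CA^{-1}B)$, which equals $\det\big(A(D - CA^{-1}B)\big) = \det(AD - ACA^{-1}B)$. At this point the hypothesis $AC = CA$ enters: it yields $ACA^{-1} = CAA^{-1} = C$, so $AD - ACA^{-1}B = AD - CB$, and the claimed identity holds whenever $A$ is invertible.

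To pass to a general $A$, I would perturb it to $A + xI$ with $x$ a scalar parameter. The commutation is preserved, since $(A + xI)C = AC + xC = CA + xC = C(A + xI)$, and $A + xI$ is invertible for all $x$ outside the finite set of eigenvalues of $-A$. Applying the invertible case to $A + xI$ gives
$$\det\begin{pmatrix} A + xI & B \\ C & D \end{pmatrix} = \det\big((A + xI)D - CB\big).$$
Both sides are polynomials in $x$ with coefficients in $\mathbb{F}$, and they agree at infinitely many values of $x$; since $\mathbb{F}$ is infinite they must be equal as polynomials. Setting $x = 0$ recovers the desired formula for the original $A$.

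I do not anticipate a serious obstacle: the computation in the invertible case is short, and the only delicate point is the transition to singular $A$, which hinges on $\mathbb{F}$ being infinite so that a polynomial identity valid at infinitely many points holds identically. Were one working over a finite field, the same reasoning would go through after reinterpreting $x$ as a formal indeterminate and computing the determinants over $\mathbb{F}[x]$, but under the stated hypothesis that $\mathbb{F}$ is a number field this subtlety does not arise.
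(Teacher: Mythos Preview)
Your proof is correct and is the standard argument for this block-determinant identity: the block-triangular factorization handles invertible $A$, and the polynomial perturbation $A+xI$ extends the result to singular $A$. The paper itself does not prove this lemma; it is quoted from the cited reference \cite{GX}, so there is no in-paper proof to compare against.
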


The following equation is the famous \textit{Lagrange's identity}.
\begin{lemma}\label{lag}
Let $a_1,a_2,\ldots,a_n$, $b_1,b_2,\ldots,b_n\in \mathbb{R}$. Then
$$\left(\sum_{j=1}^n a_j^2\right)\left(\sum_{j=1}^n b_j^2\right)-\left(\sum_{j=1}^n a_jb_j\right)^2=\sum_{1\le j<i\le n}(a_jb_i-a_ib_j)^2.$$
\end{lemma}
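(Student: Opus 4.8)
The plan is to prove this classical identity by direct algebraic expansion rather than by appealing to any machinery, since both sides are polynomials in the $a_j$ and $b_j$ and the statement is purely formal.

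First I would expand the two products on the left-hand side into double sums over index pairs. Writing $\left(\sum_j a_j^2\right)\left(\sum_j b_j^2\right)=\sum_{i,j} a_i^2 b_j^2$ and $\left(\sum_j a_j b_j\right)^2 = \sum_{i,j} a_i b_i a_j b_j$, the left-hand side collapses into the single double sum $\sum_{i,j}\left(a_i^2 b_j^2 - a_i b_i a_j b_j\right)$, where $i$ and $j$ each range over $\{1,\dots,n\}$.

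The key observation is that the diagonal contribution vanishes: for $i=j$ the summand is $a_i^2 b_i^2 - a_i^2 b_i^2 = 0$. Hence only off-diagonal pairs survive, and I would group them symmetrically. For each unordered pair $\{i,j\}$ with $i\neq j$, the two ordered terms combine as $\left(a_i^2 b_j^2 - a_i b_i a_j b_j\right)+\left(a_j^2 b_i^2 - a_j b_j a_i b_i\right) = a_i^2 b_j^2 + a_j^2 b_i^2 - 2 a_i a_j b_i b_j = (a_i b_j - a_j b_i)^2$. Summing this perfect square over all pairs $1\le j<i\le n$ then produces exactly the right-hand side.

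Since every step is an equality of polynomials, there is no genuine obstacle here; the only point demanding care is the bookkeeping in the regrouping step, namely ensuring that each unordered off-diagonal pair is counted exactly once and matched with its correct square $(a_i b_j - a_j b_i)^2$. As an alternative, one could instead argue by induction on $n$, peeling off the terms involving $a_n$ and $b_n$ and invoking the inductive hypothesis on the first $n-1$ indices, but the direct expansion above is shorter and avoids the extra cross-term accounting.
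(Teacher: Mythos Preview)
Your argument is correct: the direct double-sum expansion, the vanishing of the diagonal, and the symmetric pairing of off-diagonal terms into the perfect square $(a_ib_j-a_jb_i)^2$ are all valid, and the bookkeeping is handled properly.

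As for comparison with the paper: there is nothing to compare. The paper does not prove this lemma at all; it simply records it as ``the famous \textit{Lagrange's identity}'' and moves on. Your short direct proof is therefore strictly more than what the paper supplies, and it is the standard one.
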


	An $n\times n$ \textit{circulant matrix} $C$ have the form
$$C=
\begin{pmatrix}
	c_0 & c_1 & c_2 & \cdots & c_{n-1} \\
	c_{n-1} & c_0 & c_1 & \cdots & c_{n-2} \\
	c_{n-2} & c_{n-1} & c_0 & \cdots & c_{n-3} \\
	\vdots & \vdots & \vdots & \ddots & \vdots \\
	c_1 & c_2 & c_3 & \cdots & c_0
\end{pmatrix},
$$
where each row is a cyclic shift of the row above it. We see that the $(j,k)$-th entry of $C$ is $c_{k-j} \pmod{n}$. We denote such a circulant matrix by $C=\mathrm{Circ}(c_0,c_1,\ldots , c_{n-1})$. The eigenvalues of $C$ are  $\lambda_j=g(\V_{n}^j)$, $j=0,1,2,\ldots,n-1$,
where $\V_{n}=\exp(2\pi \textit{i}/n)$  and $g(x)=\sum_{k=0}^{n-1} c_k x^{kj}$.
Let $\Lambda=\Cay(\mathbb{Z}_n,\{\pm s_1,\pm s_2,\ldots,\pm s_\ell\})$ be a circulant graph.
Note that the vertex $i$ is adjacent to the vertices $i\pm s_1,i\pm s_2,\ldots, i\pm s_\ell~(\bmod~n)$.
Then the adjacency matrix of $\Lambda$ is $\sum_{j=1}^{\ell}\left(Q^{s_j}+Q^{-s_j}\right)$,
where $Q=\operatorname{circ}(\underbrace{0,1,0,\ldots,0}_{n})$.
Let $\Gamma=BC(\mathbb{Z}_n;R,T,S)$ be a bicirculant graph. By \cite[Lemma 3.1]{GX},
the adjacency matrix of $\Gamma$ is
$D=\begin{pmatrix}
A & C \\
C^{\top} & B
\end{pmatrix}$,
where $A, B, C$ are the adjacency matrices of $\Cay(\mathbb{Z}_n,R), \Cay(\mathbb{Z}_n,T), \Cay(\mathbb{Z}_n,S)$, respectively,
and $C^{\top}$ means the transpose of $C$. Based on the above facts, we can obtain the following result directly.

\begin{lemma}\label{adj}
Let $R=R^{-1}=\{\ell_1, \ell_2\ldots, \ell_r\} \subseteq \mathbb{Z}_n\setminus \{0\}$,
$T=T^{-1}=\{m_1, m_2,\ldots, m_t\}\subseteq \mathbb{Z}_n \setminus \{0\}$ and
$S=\{u_1,u_2\ldots, u_s\}\subseteq \mathbb{Z}_n$.
Then the adjacency matrix of $\Gamma=BC(\mathbb{Z}_n; R,S,T)$ is
$$
A(\Gamma)=\begin{pmatrix}
 \sum_{j=1}^{r}Q^{\ell_j} & \sum_{j=1}^{s}Q^{u_j}\\
 \sum_{j=1}^{s}Q^{-u_j} & \sum_{j=1}^{t}Q^{m_j}
 \end{pmatrix},
$$
where $Q=\operatorname{circ}(\underbrace{0,1,0,\ldots,0}_{n})$.
\end{lemma}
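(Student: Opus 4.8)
The plan is to reduce the claim to the block decomposition of the bi-Cayley adjacency matrix already recorded above, namely $A(\Gamma)=\begin{pmatrix} A & C \\ C^{\top} & B \end{pmatrix}$ from \cite[Lemma 3.1]{GX}, and then to identify each of the four blocks with the appropriate sum of powers of the shift matrix $Q$. The one structural fact I will use throughout is that $Q^{\ell}$ is precisely the permutation matrix whose $(j,k)$-entry equals $1$ exactly when $k\equiv j+\ell \pmod{n}$; equivalently, $Q^{\ell}$ encodes the relation ``$k$ is obtained from $j$ by adding $\ell$'' in $\mathbb{Z}_n$.

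First I would treat the two diagonal blocks. The block $A$ is the adjacency matrix of $\Cay(\mathbb{Z}_n,R)$, whose right edges join $h_0$ and $g_0$ whenever $g-h\in R$. Reading this off entrywise, the $(h,g)$-entry of $A$ is $1$ exactly when $g-h=\ell_j$ for some $j$, so $A=\sum_{j=1}^{r}Q^{\ell_j}$; the hypothesis $R=R^{-1}$ guarantees this matrix is symmetric, as it must be. The identical argument applied to $T=T^{-1}$ gives $B=\sum_{j=1}^{t}Q^{m_j}$.

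Next I would handle the off-diagonal spoke block. By definition the spokes join $h_0$ to $g_1$ whenever $g-h\in S$, so the block $C$ linking the right part to the left part has $(h,g)$-entry $1$ exactly when $g-h=u_j$ for some $j$, giving $C=\sum_{j=1}^{s}Q^{u_j}$. Here no symmetry of $S$ is assumed, so $C$ need not be symmetric. To obtain the lower-left block I would use $Q^{\top}=Q^{-1}$ (the transpose of the cyclic shift is the inverse shift), whence $C^{\top}=\sum_{j=1}^{s}(Q^{u_j})^{\top}=\sum_{j=1}^{s}Q^{-u_j}$. Substituting the four blocks into $\begin{pmatrix} A & C \\ C^{\top} & B \end{pmatrix}$ then yields the asserted form of $A(\Gamma)$.

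The computation is routine once the conventions are fixed, so I expect no deep obstacle; the only point demanding care is the bookkeeping of directions. Concretely, I must make sure that ``adjacency to $i+\ell$'' is matched to $Q^{\ell}$ rather than $Q^{-\ell}$, and that the asymmetric set $S$ is assigned to the upper-right block $C$ while its negation appears, via transposition, in the lower-left block $C^{\top}$. Because $S$ is allowed to contain $0$ (so that $Q^{0}=I$ may occur as a spoke term) and need not satisfy $S=S^{-1}$, these sign and placement conventions are exactly where an error would creep in, and verifying them against the entrywise description of the spokes is the step I would check most carefully.
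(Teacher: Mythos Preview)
Your proposal is correct and follows exactly the route the paper takes: the paper states the block form $\begin{pmatrix} A & C \\ C^{\top} & B \end{pmatrix}$ from \cite[Lemma 3.1]{GX}, records that the adjacency matrix of a circulant Cayley graph is the corresponding sum of powers of $Q$, and then declares the lemma to follow ``directly'' from these facts. Your write-up simply spells out that direct verification block by block, including the use of $Q^{\top}=Q^{-1}$ for the lower-left block, which is precisely the intended argument.
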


Let $T_n(w) = \cos(n\arccos w)$ be the Chebyshev polynomial of the first kind.
The following lemma provides basic properties of $T_n(w)$.

\begin{lemma}\label{chee}\cite{med}
 Let $w=\frac{1}{2}\left(z+\frac{1}{z}\right)$ and $T_n(w)$ be the Chebyshev polynomial of the first kind. Then
 $$T_n(w)=\frac{1}{2}\left(z^n+\frac{1}{z^n}\right),$$
 where $z\in \mathbb{C}\setminus \{0\}$ and $n$ is a positive integer.
\end{lemma}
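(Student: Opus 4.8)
The plan is to establish the identity by induction on $n$, relying on the defining three-term recurrence of the Chebyshev polynomials of the first kind, namely $T_{n+1}(w)=2w\,T_n(w)-T_{n-1}(w)$, together with the initial values $T_0(w)=1$ and $T_1(w)=w$. Setting $w=\tfrac12(z+z^{-1})$, I would take as induction hypothesis the pair of statements $T_n(w)=\tfrac12(z^n+z^{-n})$ and $T_{n-1}(w)=\tfrac12(z^{n-1}+z^{-(n-1)})$, since the recurrence is second order and hence requires two consecutive cases to propagate.

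For the base, I would check directly that $T_0(w)=1=\tfrac12(z^0+z^{-0})$ and $T_1(w)=w=\tfrac12(z+z^{-1})$, which match the claimed formula (with the convention $z^0=1$); this also shows the identity in fact holds for $n=0$, which is convenient for the induction. For the inductive step I would substitute the hypothesis into the recurrence and compute
$$
T_{n+1}(w)=2\cdot\tfrac12\left(z+\tfrac1z\right)\cdot\tfrac12\left(z^n+\tfrac1{z^n}\right)-\tfrac12\left(z^{n-1}+\tfrac1{z^{n-1}}\right).
$$
The only genuine calculation is the product
$$
\left(z+\tfrac1z\right)\left(z^n+\tfrac1{z^n}\right)=z^{n+1}+\tfrac1{z^{n+1}}+z^{n-1}+\tfrac1{z^{n-1}},
$$
after which the two terms $z^{n-1}+z^{-(n-1)}$ cancel against the subtracted $T_{n-1}$ contribution, leaving $T_{n+1}(w)=\tfrac12\left(z^{n+1}+z^{-(n+1)}\right)$, exactly the desired form with $n$ replaced by $n+1$. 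This closes the induction and proves the lemma for all positive integers $n$.

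An alternative route, which I would mention for completeness, starts from the analytic definition $T_n(\cos\theta)=\cos(n\theta)$. Writing $z=e^{\mathbf{i}\theta}$ on the unit circle gives $w=\tfrac12(z+z^{-1})=\cos\theta$, so $T_n(w)=\cos(n\theta)=\tfrac12(e^{\mathbf{i}n\theta}+e^{-\mathbf{i}n\theta})=\tfrac12(z^n+z^{-n})$, establishing the identity for all $z$ with $|z|=1$. Since both sides are Laurent polynomials in $z$ that agree on the infinitely many points of the unit circle, they must coincide as Laurent polynomials, hence for every $z\in\mathbb{C}\setminus\{0\}$. There is no real obstacle in either argument; the single point deserving care is this last extension step, where one invokes the fact that a Laurent polynomial vanishing on an infinite set is identically zero, in order to pass from the unit circle to arbitrary nonzero complex $z$.
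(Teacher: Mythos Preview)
Your proof is correct. The induction argument via the recurrence $T_{n+1}(w)=2wT_n(w)-T_{n-1}(w)$ is clean and complete, and the alternative cosine argument followed by the identity principle for Laurent polynomials is also valid.

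Note, however, that the paper does not give its own proof of this lemma: it is quoted from \cite{med} and stated without argument. So there is no proof in the paper to compare against. Your write-up would serve well as a self-contained justification should one wish to make the paper independent of that reference; of the two approaches you offer, the recurrence proof is the more economical choice here since it avoids any analytic machinery and works directly over $\mathbb{C}\setminus\{0\}$ without an extension step.
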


\begin{lemma}\cite{YS}\label{ch}
Let $H(z)=\prod_{\ell=1}^k(z-z_\ell)(z-z_\ell^{-1})$ and $H(1)\neq 0$. Then
$$\prod_{j=1}^{n-1}H(\varepsilon_n^j)=\prod_{\ell=1}^k\frac{T_n(w_\ell)-1}{w_\ell-1},$$
where $w_\ell=\frac{1}{2}(z_{\ell}+z_{\ell}^{-1})$ for $\ell=1,\ldots,k$ and $T_{n}(w)$ is the Chebyshev polynomial of the first kind.
\end{lemma}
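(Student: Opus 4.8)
The plan is to exploit the multiplicative structure of $H$ together with the cyclotomic factorization of $z^n-1$, and then convert the resulting symmetric expression in $z_\ell$ and $z_\ell^{-1}$ into Chebyshev form by means of Lemma \ref{chee}. Since $H(z)=\prod_{\ell=1}^k(z-z_\ell)(z-z_\ell^{-1})$ is itself a product, the whole evaluation factors as
\[
\prod_{j=1}^{n-1}H(\varepsilon_n^j)=\prod_{\ell=1}^k\left[\prod_{j=1}^{n-1}(\varepsilon_n^j-z_\ell)\right]\left[\prod_{j=1}^{n-1}(\varepsilon_n^j-z_\ell^{-1})\right],
\]
so it suffices to evaluate a single factor $\prod_{j=1}^{n-1}(\varepsilon_n^j-a)$ for $a\in\{z_\ell,z_\ell^{-1}\}$ and then recombine.

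First I would recall that the $n$th roots of unity satisfy $\prod_{j=0}^{n-1}(x-\varepsilon_n^j)=x^n-1$, whence $\prod_{j=0}^{n-1}(\varepsilon_n^j-a)=(-1)^n(a^n-1)$. Dividing out the $j=0$ term $(1-a)$ gives $\prod_{j=1}^{n-1}(\varepsilon_n^j-a)=(-1)^n(a^n-1)/(1-a)$. Applying this with $a=z_\ell$ and $a=z_\ell^{-1}$ and multiplying, the two factors of $(-1)^n$ combine to $1$, leaving
\[
\prod_{j=1}^{n-1}(\varepsilon_n^j-z_\ell)(\varepsilon_n^j-z_\ell^{-1})=\frac{(z_\ell^n-1)(z_\ell^{-n}-1)}{(1-z_\ell)(1-z_\ell^{-1})}.
\]

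Next I would push both numerator and denominator into Chebyshev form. Expanding gives $(z_\ell^n-1)(z_\ell^{-n}-1)=2-(z_\ell^n+z_\ell^{-n})$ and $(1-z_\ell)(1-z_\ell^{-1})=2-(z_\ell+z_\ell^{-1})$. By Lemma \ref{chee}, $z_\ell^n+z_\ell^{-n}=2T_n(w_\ell)$ with $w_\ell=\tfrac12(z_\ell+z_\ell^{-1})$, so the numerator equals $-2(T_n(w_\ell)-1)$ and the denominator equals $-2(w_\ell-1)$. The common factor $-2$ cancels, yielding $\dfrac{T_n(w_\ell)-1}{w_\ell-1}$ for each $\ell$, and taking the product over $\ell$ delivers the claimed identity.

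The only point requiring care is the well-definedness of the final expression: the hypothesis $H(1)\neq0$ forces $z_\ell\neq1$ for every $\ell$, hence $w_\ell\neq1$, so the denominators $w_\ell-1$ are nonzero; this simultaneously legitimizes the division by the $j=0$ term $(1-z_\ell)(1-z_\ell^{-1})\neq0$ carried out above. No genuine obstacle arises here — the proof is essentially a direct computation — and the main thing to watch is the bookkeeping of the $(-1)^n$ signs and of the excluded $j=0$ factor, both of which cancel cleanly.
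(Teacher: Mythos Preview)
Your proof is correct and complete: the cyclotomic factorization $\prod_{j=0}^{n-1}(x-\varepsilon_n^j)=x^n-1$ combined with the Chebyshev substitution from Lemma~\ref{chee} is exactly the right tool, and you have tracked the signs and the excluded $j=0$ factor carefully. Note that the paper does not supply its own proof of this lemma; it is quoted from~\cite{YS}, so there is no in-paper argument to compare against. Your direct computation is the natural and standard route.
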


Let $P(z)$ be a polynomial of degree $k$ with integer coefficients and $P(z)=P(z^{-1})$. Then $P(z)$ have the following form
$$
P(z)=\eta_0+\sum_{j=1}^k\eta_j(z^j+z^{-j}),
$$
where $\eta_0,\eta_1,\ldots,\eta_k$ are integers.
Let $w=\frac{1}{2}\left(z+z^{-1}\right)$ and $K(w)=\eta_0+\sum_{j=1}^k2\eta_jT_j(w)$. By Lemma \ref{chee}, we have $P(z)=K(w)$.
The polynomial $K(w)$ is called the \textit{Chebyshev transform} of $P(z)$.

\begin{lemma}\label{che}
	Let $C(z)=\eta_0+\sum_{j=1}^k\eta_j(z^j+z^{-j})$ and $1$ be a root of $C(z)$ 
	with multiplicity 2. Then
	$$\prod_{j=1}^{n-1}C(\V_n^j)=\frac{1}{C^{\prime\prime}(1)}(-1)^{nk-1}\eta_k^{n}n^22^k\prod_{\ell=1}^{k-1}(T_n(w_\ell)-1),$$
	where $C^{\prime\prime}(z)$ denotes $2$-order derivative of $C_1(z)$ and $w_1, w_2,\ldots, w_k$ are the roots, different from 1, of Chebyshev transform of $C(z)=0$.
\end{lemma}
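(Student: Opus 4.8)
The plan is to reduce the claim to Lemma \ref{ch} by first peeling off the double root at $z=1$, which is exactly what makes the generic formula in Lemma \ref{ch} inapplicable. Since $C(z)=C(z^{-1})$ has reciprocal degree $k$ and $1$ is a root of multiplicity $2$, the associated palindromic polynomial $z^kC(z)$ has degree $2k$ with a double zero at $z=1$ and the remaining $2(k-1)$ zeros in reciprocal pairs $\{z_\ell,z_\ell^{-1}\}$, none equal to $1$. Hence $C$ factors as
$$C(z)=\eta_k z^{-k}(z-1)^2 H(z), \qquad H(z)=\prod_{\ell=1}^{k-1}(z-z_\ell)(z-z_\ell^{-1}),$$
and $H(1)\neq 0$. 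One checks that the top term $\eta_k z^k$ and the bottom term $\eta_k z^{-k}$ of both sides agree, and that $w=1$ is a \emph{simple} root of the Chebyshev transform $K(w)$: the substitution $w-1=\tfrac{(z-1)^2}{2z}$ shows a simple zero of $K$ at $w=1$ produces precisely a double zero of $C$ at $z=1$. Thus $K$ has exactly $k-1$ roots different from $1$, namely $w_\ell=\tfrac12(z_\ell+z_\ell^{-1})$, matching the product over $\ell=1,\dots,k-1$ in the statement.

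Next I would evaluate $\prod_{j=1}^{n-1}C(\varepsilon_n^j)$ by distributing it across the four factors of the factorization. The constant contributes $\eta_k^{\,n-1}$. The monomial contributes the phase
$$\prod_{j=1}^{n-1}\varepsilon_n^{-kj}=\varepsilon_n^{-k\,n(n-1)/2}=(-1)^{k(n-1)}.$$
The double-root factor is handled by the cyclotomic identity $\prod_{j=1}^{n-1}(z-\varepsilon_n^j)=(z^n-1)/(z-1)$ evaluated at $z=1$, which gives $\prod_{j=1}^{n-1}(\varepsilon_n^j-1)=(-1)^{n-1}n$ and hence $\prod_{j=1}^{n-1}(\varepsilon_n^j-1)^2=n^2$. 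Finally, because $H(1)\neq 0$, Lemma \ref{ch} applies directly to $H$ (with $k$ there replaced by $k-1$) and yields $\prod_{j=1}^{n-1}H(\varepsilon_n^j)=\prod_{\ell=1}^{k-1}\frac{T_n(w_\ell)-1}{w_\ell-1}$.

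The final step is to absorb the leftover $\prod_{\ell=1}^{k-1}(w_\ell-1)^{-1}$ into the prefactor $1/C''(1)$. Writing $C(z)=(z-1)^2G(z)$ with $G(z)=\eta_k z^{-k}H(z)$ and differentiating twice gives $C''(1)=2G(1)=2\eta_k H(1)$. Using $(1-z_\ell)(1-z_\ell^{-1})=2-(z_\ell+z_\ell^{-1})=-2(w_\ell-1)$ I would obtain $H(1)=(-2)^{k-1}\prod_{\ell=1}^{k-1}(w_\ell-1)$, whence
$$\prod_{\ell=1}^{k-1}\frac{1}{w_\ell-1}=\frac{(-1)^{k-1}\eta_k 2^{k}}{C''(1)}.$$
Substituting this and the three evaluations above, then collecting $\eta_k^{\,n-1}\cdot\eta_k=\eta_k^{\,n}$ and $(-1)^{k(n-1)}(-1)^{k-1}=(-1)^{nk-1}$, reproduces exactly the asserted closed form.

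I expect the only real obstacle to be the bookkeeping at the degenerate point. Concretely, one must correctly strip the double root so that Lemma \ref{ch} becomes applicable to $H$, and then track the signs and the factor $2^k$ through both the identity $(1-z_\ell)(1-z_\ell^{-1})=-2(w_\ell-1)$ and the second-derivative computation $C''(1)=2\eta_k H(1)$. The remaining products are routine cyclotomic evaluations.
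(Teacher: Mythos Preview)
Your proof is correct and follows essentially the same approach as the paper: factor out the double root at $z=1$, apply Lemma~\ref{ch} to the remaining reciprocal polynomial $H$, and then relate $\prod_{\ell}(w_\ell-1)$ to $C''(1)$. The only cosmetic difference is that the paper obtains the identity $C''(1)=(-1)^{k-1}2^k\eta_k\prod_{\ell}(w_\ell-1)$ via a Taylor expansion of $C_1(z)=z^kC(z)/\eta_k$ combined with a resultant computation (Lemma~\ref{book}), whereas you get it more directly by differentiating $C(z)=(z-1)^2G(z)$ twice and using $(1-z_\ell)(1-z_\ell^{-1})=-2(w_\ell-1)$.
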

\begin{proof}
	Let $C_1(z)=\frac{z^k}{\eta_k}C(z)$. Then $C_1(z)$ is a monic polynomial with the same roots as $C(z)$ and the degree of $C_1(z)$ is $2k$. 
	Suppose that the roots of $C_1(z)=0$ are $1,1,z_1,z_1^{-1},\ldots, z_{k-1},z_{k-1}^{-1}$ and $H(z)=\prod_{j=1}^{k-1}(z-z_j)(z-z_j^{-1})$.
	Then $C(z)=\frac{\eta_k(z-1)^2}{z^k}H(z)$.
	Since $H(1)\not=0$, by Lemma \ref{ch}, we have
	$$\prod_{j=1}^{n-1}H(\varepsilon_n^j)=\prod_{\ell=1}^{k-1}\frac{T_n(w_\ell)-1}{w_\ell-1},$$
	where $w_\ell=\frac{1}{2}(z_\ell+z_\ell^{-1})$ for $\ell=1,2,\ldots,k-1$.
	Note that 
	$$
	\prod_{j=1}^{n-1}(1-\varepsilon_n^j)=\lim_{z\to1}\prod_{j=1}^{n-1}(z-\varepsilon_n^j)=\lim_{z\to1}\frac{z^n-1}{z-1}=n.
	$$
	Therefore,
	$$
	\prod_{j=1}^{n-1}C(\V_n^j)=\prod_{j=1}^{n-1}\frac{\eta_k(\V_n^j-1)^2}{\V_n^{jk}}\prod_{j=1}^{n-1}H(\V_n^j)=
	(-1)^{(n+1)k}\eta_k^{n-1}n^2\prod_{\ell=1}^{k-1}\frac{T_n(w_\ell)-1}{w_\ell-1}.
	$$
	Note that the Taylor expansion of the polynomial $C_1(z)$ at $z = 1$ is
	$$
	C_1(z)=C_1(1)+C_1^{\prime}(1)(z-1)+\frac{C_1^{\prime\prime}(1)(z-1)^2}{2!}+
	\frac{C_1^{\prime\prime\prime}(1)(z-1)^3}{3!}+\cdots.
	$$
	Since $C_1(1)=C_1^{\prime}(1)=0$, by Lemma \ref{book}, we have
	$$
	\begin{aligned}
		(-1)^{k-1}2^{k-1}\prod_{\ell=1}^{k-1}(w_\ell-1)&=\prod_{\ell=1}^{k-1}(z_\ell-1)(z_\ell^{-1}-1)=\prod_{\substack{C_1(z)=0,\\z\not=1}}(z-1)\\
		&=\mathrm{Res}(z-1,\frac{C_1(z)}{(z-1)^2})\\
		&=\mathrm{Res}(z-1,\frac{C_1^{\prime\prime}(1)}{2}+\frac{C_1^{\prime\prime\prime}(1)(z-1)}{6}+\cdots)\\
		&=\frac{C_1^{\prime\prime}(1)}{2}.
	\end{aligned}
	$$
	Note that $C_1^{\prime\prime}(1)=\frac{C^{\prime\prime}(1)}{\eta_k}$. Therefore, we have
	$$
	\prod_{j=1}^{n-1}C(\V_n^j)=\frac{1}{C^{(2)}(1)}(-1)^{nk-1}\eta_k^{n}n^22^k\prod_{\ell=1}^{k-1}(T_n(w_\ell)-1).
	$$
	This completes the proof.
	
\end{proof}

Now we define
\begin{itemize}
	\item[] $R_1=\{ \pm \a_1, \pm \a_2,\ldots, \pm \a_r\}$,~~$T_1=\{\pm \beta_1, \pm \beta_2,\ldots, \pm \beta_t\}$,~~
	$S_1=\{\gamma_1, \gamma_2,\ldots, \gamma_s\}$,
	\item[] $R_2=\{ \pm \a_1, \pm \a_2,\ldots, \pm \a_r, \frac{n}{2}\}$,~~ $T_2=\{\pm \beta_1, \pm \beta_2,\ldots, \pm \beta_t\}$,~~
	$S_2=\{\gamma_1, \gamma_2,\ldots, \gamma_s\}$,
	\item[] $R_3=\{ \pm \a_1, \pm \a_2,\ldots, \pm \a_r\}$, ~~$T_3=\{\pm \beta_1, \pm \beta_2,\ldots, \pm \beta_t,\frac{n}{2}\}$,~~
	$S_3=\{\gamma_1, \gamma_2,\ldots, \gamma_s\}$,
	\item[] $R_4=\{ \pm \a_1, \pm \a_2,\ldots, \pm \a_r, \frac{n}{2}\}$, ~~$T_4=\{\pm \beta_1, \pm \beta_2,\ldots, \pm \beta_t,\frac{n}{2}\}$,~~
	$S_4=\{\gamma_1, \gamma_2,\ldots, \gamma_s\}$,
\end{itemize}
where $1\le \a_1<\cdots <\a_r<\frac{n}{2}$, $1\le \b_1<\cdots<\b_t<\frac{n}{2}$, $0\le \gamma_1<\cdots<\gamma_s\le n-1$.

Throughout the rest of this paper, we always assume that
$$
\Gamma_j=BC(\mathbb{Z}_n;R_j,T_j,S_j),
$$
where $j=1,2,3,4$. Obviously, any bicirculant graph $\Gamma$ must be one of $\Gamma_1,\Gamma_2, \Gamma_3, \Gamma_4$
and $n$ is even for $\Gamma_2, \Gamma_3, \Gamma_4$. 
To ensure the graph $\Gamma$ is connected, we further assume that $\Gamma$ satisfies 
at least one of the following conditions:
\begin{itemize}
\item[(a)]
$\gcd(n,\a_1,\a_2,\ldots, \a_r)=1$ and $s>0$,
\item[(b)]
$\gcd(n,\b_1,\b_2,\ldots, \b_t)=1$ and $s>0$,
\item[(c)]
$\gcd(n,\gamma_\ell-\gamma_k, 0\le\ell<k\le n-1)=1$.
\end{itemize}

Let 
$$
\begin{aligned}
	\mathcal{A}(z)&=2r+s-\sum_{j=1}^r\left(z^{\a_j}+z^{-\a_j}\right),\\
	\mathcal{B}(z)&=2t+s-\sum_{j=1}^t\left(z^{\b_j}+z^{-\b_j}\right),\\
	\mathcal{C}(z)&=\sum_{j=1}^s z^{\gamma_j}. 
\end{aligned}
$$
Define
\begin{align}
	P_1(z)&=\mathcal{A}(z)\mathcal{B}(z)-\mathcal{C}(z^{-1})\mathcal{C}(z),\\
	P_{2}(z)&=(\mathcal{A}(z)+2)\mathcal{B}(z)-\mathcal{C}(z^{-1})\mathcal{C}(z),\\
	P_{3}(z)&=\mathcal{A}(z)(\mathcal{B}(z)+2)-\mathcal{C}(z^{-1})\mathcal{C}(z), \\
	P_{4}(z)&=(\mathcal{A}(z)+2)(\mathcal{B}(z)+2)-\mathcal{C}(z^{-1})\mathcal{C}(z).
\end{align}
Clearly, $P_1(z),P_2(z),P_3(z),P_4(z)$ have same degree and $P_j(z)=P_j(z^{-1})$ for $j=1,2,3,4$.
In the following, we always assume that the degree of $P_j(z)$ is $k$ for $j=1,2,3,4$.

	\begin{lemma}\label{one}
	Let $\Gamma$ be a connected bicirculant graph such that at least $\gcd(\a_1,\a_2,\ldots, \a_r)=1$,  $\gcd(\b_1,\b_2,\ldots, \b_t)=1$, $\gcd(\gamma_\ell-\gamma_k, 0\le\ell<k\le n-1)=1$ hold. Then 
	$P_1(e^{\textbf{i}\theta})=0 $ if and only if $e^{\textbf{i}\theta}=1$, and $P_j(e^{\textbf{i}\theta})>0$ for $j=2,3,4$.
\end{lemma}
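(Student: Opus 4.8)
The plan is to evaluate everything on the unit circle by setting $z=e^{\mathbf{i}\theta}$, so that $z^{-1}=\overline{z}$ and each building block becomes a manifestly real or nonnegative quantity. First I would record three elementary estimates. Since $z^{\a_j}+z^{-\a_j}=2\cos(\a_j\theta)$ on the circle,
$$
\mathcal{A}(e^{\mathbf{i}\theta})=2r+s-2\sum_{j=1}^r\cos(\a_j\theta)\ge s,\qquad
\mathcal{B}(e^{\mathbf{i}\theta})=2t+s-2\sum_{j=1}^t\cos(\b_j\theta)\ge s,
$$
because each cosine is at most $1$; both are real, and since $s>0$ for a connected $\Gamma$ they are strictly positive. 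Moreover $\mathcal{C}(z^{-1})=\overline{\mathcal{C}(z)}$, so that $\mathcal{C}(e^{-\mathbf{i}\theta})\mathcal{C}(e^{\mathbf{i}\theta})=|\mathcal{C}(e^{\mathbf{i}\theta})|^2\ge 0$, and the triangle inequality gives $|\mathcal{C}(e^{\mathbf{i}\theta})|^2=\big|\sum_{j=1}^s e^{\mathbf{i}\gamma_j\theta}\big|^2\le s^2$.

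With these bounds the positivity statements for $P_2,P_3,P_4$ are immediate, and I would dispatch them first. Using $\mathcal{A}+2\ge s+2$ together with $\mathcal{B}\ge s$ gives
$$
P_2(e^{\mathbf{i}\theta})\ge (s+2)s-s^2=2s>0,
$$
and symmetrically $P_3(e^{\mathbf{i}\theta})\ge s(s+2)-s^2=2s>0$ and $P_4(e^{\mathbf{i}\theta})\ge (s+2)^2-s^2=4s+4>0$, every inequality being strict because $s>0$.

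For $P_1$ the same chain yields $P_1(e^{\mathbf{i}\theta})=\mathcal{A}(e^{\mathbf{i}\theta})\mathcal{B}(e^{\mathbf{i}\theta})-|\mathcal{C}(e^{\mathbf{i}\theta})|^2\ge s^2-s^2=0$, while a direct evaluation at $z=1$ (where $\mathcal{A}=\mathcal{B}=\mathcal{C}=s$) gives $P_1(1)=0$, settling one direction. The crux is the equality case. If $P_1(e^{\mathbf{i}\theta})=0$, then since $\mathcal{A}\mathcal{B}\ge s^2\ge|\mathcal{C}|^2$ both inequalities must be equalities; as $\mathcal{A},\mathcal{B}\ge s>0$, the relation $\mathcal{A}\mathcal{B}=s^2$ forces $\mathcal{A}(e^{\mathbf{i}\theta})=\mathcal{B}(e^{\mathbf{i}\theta})=s$, while $|\mathcal{C}(e^{\mathbf{i}\theta})|^2=s^2$ forces the $s$ unit vectors $e^{\mathbf{i}\gamma_j\theta}$ to coincide. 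Translating back, $\cos(\a_j\theta)=1$ for all $j$, $\cos(\b_j\theta)=1$ for all $j$, and $e^{\mathbf{i}(\gamma_\ell-\gamma_k)\theta}=1$ for all $\ell<k$; equivalently $\a_j\theta,\ \b_j\theta,\ (\gamma_\ell-\gamma_k)\theta\in 2\pi\mathbb{Z}$.

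This is where I expect the only genuine work, and where the gcd hypothesis enters. Whichever of the three families has greatest common divisor $1$, Bézout's identity expresses $1$ as an integer combination of that family; multiplying the corresponding memberships $(\cdot)\theta\in 2\pi\mathbb{Z}$ by those integers and summing yields $\theta\in 2\pi\mathbb{Z}$, that is $e^{\mathbf{i}\theta}=1$. This closes the equivalence for $P_1$ and finishes the proof. The main obstacle is thus confined to the bookkeeping of the equality cases together with this Bézout step; the analytic estimates themselves are routine consequences of $|\cos|\le 1$ and the triangle inequality.
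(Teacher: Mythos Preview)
Your proposal is correct and follows essentially the same approach as the paper: both evaluate $\mathcal{A},\mathcal{B},\mathcal{C}$ on the unit circle, use $\cos\le 1$ and the triangle inequality to obtain $\mathcal{A},\mathcal{B}\ge s$ and $|\mathcal{C}|^2\le s^2$, deduce the lower bounds for $P_1,\dots,P_4$, and then handle the equality case for $P_1$ via a B\'ezout argument on whichever family has gcd equal to $1$. Your treatment of the equality step is in fact slightly more explicit than the paper's, since you spell out why $\mathcal{A}\mathcal{B}=|\mathcal{C}|^2$ together with $\mathcal{A}\mathcal{B}\ge s^2\ge|\mathcal{C}|^2$ forces $\mathcal{A}=\mathcal{B}=s$ and $|\mathcal{C}|=s$.
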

\begin{proof}
	Since $\mathcal{A}(z)=2r+s-\sum_{j=1}^r\left(z^{\a_j}+z^{-\a_j}\right)$, 
	$\mathcal{B}(z)=2t+s-\sum_{j=1}^t\left(z^{\b_j}+z^{-\b_j}\right)$, and
	$\mathcal{C}(z)=-\sum_{j=1}^s z^{\gamma_j}$,
	we have
	$$
	\begin{aligned}
		\mathcal{A}(e^{\mathbf{i}\theta})
		&=2r+s-\sum_{j=1}^r \left(e^{\mathbf{i}\theta\a_j}+e^{-\mathbf{i}\theta\a_j}\right)
		=s+2\sum_{j=1}^r(1-\cos(\theta \a_j))\ge s,\\
	\mathcal{B}(e^{\mathbf{i}\theta})
	&=2r+s-\sum_{j=1}^t \left(e^{\mathbf{i}\theta\b_j}+e^{-\mathbf{i}\theta\b_j}\right)
	=s+2\sum_{j=1}^t(1-\cos(\theta \b_j))\ge s,\\
		|\mathcal{C}(e^{\mathbf{i}\theta})|^2&=\mathcal{C}(e^{\mathbf{i}\theta})\mathcal{C}(e^{\mathbf{-i}\theta})=\left(\sum_{\ell=1}^{s} {e^{\mathbf{i}\theta\gamma_\ell}}\right)\left(\sum_{k=1}^{s} {e^{\mathbf{-i}\theta\gamma_k}}\right)
		=\sum_{1\le \ell,k\le s}\cos(\theta(\gamma_\ell-\gamma_k)) \le s^2.
	\end{aligned}
	$$
	Therefore, we obtain
	$$
	\begin{aligned}
		P_1(e^{\textbf{i}\theta})&=\mathcal{A}(e^{\mathbf{i}\theta})\mathcal{B}(e^{\mathbf{i}\theta}) -|\mathcal{C}(e^{\mathbf{i}\theta})|^2
		\ge s^2-s^2=0\\
		P_2(e^{\textbf{i}\theta})&=(\mathcal{A}(e^{\mathbf{i}\theta})+2)\mathcal{B}(e^{\mathbf{i}\theta}) -|\mathcal{C}(e^{\mathbf{i}\theta})|^2\ge s^2-s^2+2s=2s,\\
	P_3(e^{\textbf{i}\theta})&=\mathcal{A}(e^{\mathbf{i}\theta})(\mathcal{B}(e^{\mathbf{i}\theta})+2) -|\mathcal{C}(e^{\mathbf{i}\theta})|^2\ge s^2-s^2+2s=2s,\\
	P_4(e^{\textbf{i}\theta})&=(\mathcal{A}(e^{\mathbf{i}\theta})+2)(\mathcal{B}(e^{\mathbf{i}\theta})+2)-|\mathcal{C}(e^{\mathbf{i}\theta})|^2\ge s^2-s^2+4s=4s.
	\end{aligned}
	$$
	Note that $P_1(e^{\textbf{i}\theta})=0$ if and only if $\cos(\theta \a_i)=\cos(\theta \b_j)=\cos(\theta(\gamma_\ell-\gamma_k))=1$ for $1\le i\le r$ and  $1\le j\le t$ and
	 $1\le \ell,k\le s$.
	If $\cos(\theta \a_i)=\cos(\theta \b_j)=\cos(\theta(\gamma_\ell-\gamma_k))=1$ for $1\le i\le r$ and  $1\le j\le t$ and
	$1\le \ell,k\le s$, then $\theta \a_i=2h_i\pi$ and $\theta \b_j=2n_j\pi$ 
	and $\theta(\gamma_\ell-\gamma_k)=2m_{\ell,k}\pi$, where $h_i, n_j, m_{\ell,k}$ are integers.
	Since
 $\gcd(\a_1,\a_2,\ldots, \a_r)=1$
or  $\gcd(\b_1,\b_2,\ldots, \b_t)=1$,
or $\gcd(\b_1,\b_2,\ldots, \b_t)=1$ or $\gcd(\gamma_\ell-\gamma_k, 0<\ell<k<n-1)=1$
	 there exist integer sequences $\{b_i\}_{1 \leqslant i \leqslant r}$, $\{c_j\}_{1 \leqslant j \leqslant t}$ and $\{d_{\ell,k}\}_{0\le \ell <k \le n-1}$ such that
	$$
	\sum_{i=1}^rb_i\a_i=1 ~~\text{or}~~\sum_{j=1}^tc_j\beta_j=1
	~~\text{or}~~\sum_{0\le \ell <k \le n-1}d_{\ell,k}(\gamma_\ell-\gamma_k)=1,
	$$
	which is equivalent to
	$$
	\sum_{i=1}^r2\pi h_ib_i=\theta ~~\text{or}~~ \sum_{j=1}^t2\pi n_jc_j=\theta
	~~\text{or}~~\sum_{0\le \ell <k \le n-1}2\pi m_{\ell,k}d_{\ell,k}=\theta .
	$$
	Therefore, $\cos\theta=1$ or $e^{\textbf{i}\theta}=1$. Since $\Gamma$ is connected, we have $s>0$.
	Then $P_j(e^{\textit{i}\theta})>0$ for $j=2,3,4$. 	This completes the proof.
\end{proof}

\begin{lemma}\label{two}
	Let $\Gamma$ be a connected bicirculant graph. Then $P_1(1)=P^{\prime}_1(1) = 0$ and $P^{\prime\prime}_1(1) < 0$.
\end{lemma}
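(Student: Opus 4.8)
The plan is to verify the three assertions by evaluating $P_1$ together with its first and second $z$-derivatives at $z=1$, exploiting the palindromic structure of $\mathcal{A},\mathcal{B},\mathcal{C}$ and then reading off the sign of $P_1''(1)$ via Lagrange's identity.

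First I would record the values at $z=1$. Since $z^{\a_j}+z^{-\a_j}=2$ and $z^{\b_j}+z^{-\b_j}=2$ there, one finds $\mathcal{A}(1)=\mathcal{B}(1)=s$ and $\mathcal{C}(1)=\mathcal{C}(1^{-1})=s$, whence $P_1(1)=s\cdot s-s\cdot s=0$. Writing $\bar{\mathcal C}(z)=\mathcal{C}(z^{-1})$, the product rule gives $P_1'=\mathcal{A}'\mathcal{B}+\mathcal{A}\mathcal{B}'-\bar{\mathcal C}'\mathcal{C}-\bar{\mathcal C}\mathcal{C}'$. Each pair $z^{\a_j}+z^{-\a_j}$ has vanishing derivative at $z=1$, so $\mathcal{A}'(1)=\mathcal{B}'(1)=0$, while $\mathcal{C}'(1)=\sum_\ell\gamma_\ell$ and $\bar{\mathcal C}'(1)=-\sum_\ell\gamma_\ell$. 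Substituting, the two $\mathcal{C}$-terms cancel and $P_1'(1)=0$.

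The decisive step is the second derivative. Differentiating once more and evaluating at $z=1$ with $\mathcal{A}''(1)=-2\sum_{j}\a_j^2$, $\mathcal{B}''(1)=-2\sum_j\b_j^2$, $\mathcal{C}''(1)=\sum_\ell\gamma_\ell^2-\sum_\ell\gamma_\ell$, and $\bar{\mathcal C}''(1)=\sum_\ell\gamma_\ell^2+\sum_\ell\gamma_\ell$, the mixed terms collapse and one obtains
$$P_1''(1)=-2\left(s\sum_{j=1}^r\a_j^2+s\sum_{j=1}^t\b_j^2+s\sum_{\ell=1}^s\gamma_\ell^2-\Big(\sum_{\ell=1}^s\gamma_\ell\Big)^2\right).$$
Applying Lemma \ref{lag} with $a_\ell=\gamma_\ell$ and $b_\ell=1$ rewrites the last two terms as $\sum_{1\le \ell<k\le s}(\gamma_k-\gamma_\ell)^2\ge0$, so the whole bracket is a sum of nonnegative quantities and $P_1''(1)\le0$. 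Equality would force $\sum_j\a_j^2=\sum_j\b_j^2=0$ and all $\gamma_\ell$ equal, i.e. $r=t=0$ and $s\le1$; together with $s>0$ this leaves only $r=t=0,\ s=1$. In that configuration $\Gamma$ is a perfect matching between the two parts and hence disconnected (equivalently, each connectivity condition (a)--(c) degenerates to $\gcd(n)=n=1$). Since $\Gamma$ is connected, this case is excluded and $P_1''(1)<0$.

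I expect the only real obstacle to be the careful bookkeeping in the second-derivative computation — in particular tracking the opposite signs contributed by $\mathcal{C}(z^{-1})$ and $\mathcal{C}(z)$ and confirming that all cross terms cancel so cleanly into the displayed sum of squares. Once that identity is in hand, the sign analysis and the exclusion of the degenerate matching follow at once from Lagrange's identity and the connectivity hypothesis.
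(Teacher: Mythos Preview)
Your proposal is correct and follows essentially the same route as the paper: compute $P_1(1)$, $P_1'(1)$, $P_1''(1)$ directly, then rewrite the $\gamma$-part via Lagrange's identity (Lemma~\ref{lag}) to obtain $P_1''(1)=-2\bigl(s\sum\alpha_j^2+s\sum\beta_j^2+\sum_{j<k}(\gamma_j-\gamma_k)^2\bigr)$, and rule out equality by connectivity. Your treatment of the degenerate equality case ($r=t=0$, $s=1$, forcing a disjoint matching) is in fact more explicit than the paper's, which simply says ``this contradicts our assumption.''
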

\begin{proof}
	By simple computation, we obtain that $P_1(1)=P^{\prime}_1(1)=0$ and 
	$$
	\begin{aligned}
		P^{\prime\prime}_1(1)&=-2s\sum_{j=1}^{r}\a_j^2-2s\sum_{j=1}^{t}\b_j^2
		-2s\sum_{j=1}^{s}\gamma_j^2+2\left(\sum_{j=1}^{s}\gamma_j\right)^2.
	\end{aligned}
	$$
	By Lemma \ref{lag}, we have
	$$
	\begin{aligned}
		P^{\prime\prime}_1(1)&=-2\left(s\sum_{j=1}^{r}\a_j^2+s\sum_{j=1}^{t}\b_j^2
		+s\sum_{j=1}^{s}\gamma_j^2-\left(\sum_{j=1}^{s}\gamma_j\right)^2\right)\\
		&=-2\left(s\sum_{j=1}^{r}\a_j^2+s\sum_{j=1}^{t}\b_j^2+\sum_{1\le j<k\le s}(\gamma_j-\gamma_k)^2\right)\le 0.\\
	\end{aligned}
	$$
	Note that $P^{\prime\prime}_1(1)=0$ if and only if all elements in 
$\{\a_j\}_{j=1,2,\ldots,r}$ and $\{\b_j\}_{j=1,2,\ldots,t}$ and  $\{\gamma_j-\gamma_k\}_{j,k=1,2,\ldots,s}$ are equal to 0. This contradicts our assumption.

\end{proof}

\begin{prop}\cite{med}\label{gen}
Let $g(z)$ be a polynomial of degree $2k$ with integer coefficients. Suppose that all the roots of the polynomial $g(z)=0$ are $\xi_1, \xi_2, \ldots, \xi_{2k-1}, \xi_{2k}$. Then
$$
F(x)=\sum_{n=1}^{\infty} n \prod_{j=1}^{2k}\left(\xi_j^n-1\right) x^n
$$
is a rational function with integer coefficients.
Moreover, if $\xi_{j+k}=\xi_j^{-1}$ for $j=1,2, \ldots, k$, then $F(x)=F(1 / x)$.
\end{prop}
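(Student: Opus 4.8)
The plan is to expand the product over the roots into a single sum of geometric-type series, read off rationality immediately, and then treat integrality and the functional equation separately. Since $2k$ is even, for each $n$ I would write
$$\prod_{j=1}^{2k}\left(\xi_j^n-1\right)=\sum_{I\subseteq\{1,\ldots,2k\}}(-1)^{|I|}\eta_I^{\,n},\qquad \eta_I:=\prod_{j\in I}\xi_j,$$
where $\eta_\emptyset=1$. Substituting this into the definition of $F$, interchanging the finite sum over $I$ with the series, and using $\sum_{n\ge1}ny^n=y/(1-y)^2$, I obtain
$$F(x)=\sum_{I}(-1)^{|I|}\sum_{n=1}^{\infty}n(\eta_I x)^n=\sum_{I}(-1)^{|I|}\frac{\eta_I x}{(1-\eta_I x)^2}.$$
This already displays $F$ as a finite sum of rational functions, hence as a rational function, settling the first assertion.

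The main obstacle is the claim of integer coefficients, because the $\eta_I$ are merely algebraic numbers, so nothing in the displayed formula is visibly rational; the content is that the symmetry of the configuration forces everything down to $\mathbb{Z}$. I would first show the Taylor coefficients $c_n:=n\prod_{j}(\xi_j^n-1)$ are integers. Taking $g$ monic (so that the $\xi_j$ are algebraic integers, as in all our applications), I would recognize $\prod_{j=1}^{2k}(\xi_j^n-1)$ as the resultant $\mathrm{Res}(g(z),z^n-1)$: applying Lemma \ref{book}(1) to the two monic polynomials $g(z)$ and $z^n-1$ gives $\mathrm{Res}(g(z),z^n-1)=\prod_{j=1}^{2k}\prod_{\ell=0}^{n-1}(\xi_j-\varepsilon_n^{\ell})=\prod_{j=1}^{2k}(\xi_j^n-1)$, and the resultant is the determinant of an integer matrix, hence an integer, so $c_n\in\mathbb{Z}$. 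To pass from an integer power series to a ratio of integer polynomials with unit constant term, I would exhibit an explicit denominator: the polynomial $G(y):=\prod_I(y-\eta_I)$ is monic and, since any permutation of the $\xi_j$ merely permutes the subsets $I$ and hence the multiset $\{\eta_I\}$, its coefficients are symmetric in $\xi_1,\ldots,\xi_{2k}$ and therefore lie in $\mathbb{Z}$. Its reciprocal $Q_0(x):=\prod_I(1-\eta_I x)$ then belongs to $\mathbb{Z}[x]$ with $Q_0(0)=1$, so $Q:=Q_0^2\in\mathbb{Z}[x]$ clears all denominators in the formula for $F$. Finally $P:=FQ$ is a polynomial whose power-series coefficients are integers (being a product of $\sum c_n x^n\in\mathbb{Z}[[x]]$ with $Q\in\mathbb{Z}[x]$), whence $P\in\mathbb{Z}[x]$ and $F=P/Q$ has integer coefficients. (Alternatively, $c_n\in\mathbb{Z}$ together with rationality yields this at once via Fatou's lemma.)

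For the functional equation I would exploit that the summand function $f(y):=y/(1-y)^2$ is invariant under inversion, i.e. $f(1/y)=f(y)$, a one-line computation. Consequently
$$F(1/x)=\sum_{I}(-1)^{|I|}f\!\left(\eta_I/x\right)=\sum_{I}(-1)^{|I|}f\!\left(\eta_I^{-1}x\right).$$
When $\xi_{j+k}=\xi_j^{-1}$ for $j=1,\ldots,k$, the transposition pattern $\sigma(j)=j+k$, $\sigma(j+k)=j$ is an involution of $\{1,\ldots,2k\}$ with $\xi_{\sigma(m)}=\xi_m^{-1}$ for every $m$; it induces a cardinality-preserving bijection $I\mapsto\sigma(I)$ of subsets satisfying $\eta_{\sigma(I)}=\eta_I^{-1}$. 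Reindexing the last sum by $J=\sigma(I)$ then gives $F(1/x)=\sum_J(-1)^{|J|}f(\eta_J x)=F(x)$, as required.

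In summary, rationality and the symmetry $F(x)=F(1/x)$ are the easy, essentially formal steps, while the integrality of the coefficients is the genuine difficulty: one must argue that the closed form, a signed sum over subsets with irrational weights $\eta_I$, descends to $\mathbb{Z}[x]$. The two places where the algebraic-integer hypothesis on $g$ is actually used are the resultant identity (for $c_n\in\mathbb{Z}$) and the symmetric-function computation of the integer denominator $Q$; I would flag this monic hypothesis explicitly, since for a non-monic $g$ the quantities $\prod_j(\xi_j^n-1)$ need not be integers.
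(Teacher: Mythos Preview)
The paper does not prove this proposition; it is quoted from \cite{med} without argument, so there is no proof in the present paper to compare against. Your proof is correct: the subset expansion together with $\sum_{n\ge1}ny^n=y/(1-y)^2$ gives rationality at once, the resultant identification $\prod_j(\xi_j^n-1)=\mathrm{Res}(g,z^n-1)$ (for monic $g$) and the symmetric-function construction of the integer denominator $Q_0^2$ handle integrality, and the invariance $f(1/y)=f(y)$ combined with the cardinality-preserving involution $I\mapsto\sigma(I)$ on subsets dispatches the functional equation cleanly.

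Your caveat about monicity is not mere caution but necessity: as stated, the integrality claim is false. Take $g(z)=2z^2-5z+2$, with roots $\xi_1=\tfrac12$ and $\xi_2=2=\xi_1^{-1}$ (so even the symmetry hypothesis of the second part holds); then the coefficient of $x$ is $(\tfrac12-1)(2-1)=-\tfrac12\notin\mathbb{Z}$. In the paper's applications (Theorem~\ref{th4}) this does not cause trouble, because the leading coefficient is absorbed into the variable via the substitution $x\mapsto a_kx$: the Taylor coefficients of $F_1(x)$ are the integers $\tau(\Gamma_1)$ themselves, so integrality is known a priori and only rationality and the functional equation are genuinely being extracted from the proposition. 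You are right to flag the monic hypothesis explicitly.
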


\section{Counting the number of spanning trees}
In this section, we aim to find new formulas for the number of spanning trees
of  the bicirculant graph in light of Chebyshev polynomials.

\begin{theorem}\label{th1}
Let $\Gamma=BC(\mathbb{Z}_n;R,T,S)$ be a bicirculant graph. Then we have
\begin{itemize}
\item[(1)] if $\Gamma=\Gamma_1$, then 
$$
\tau(\Gamma_1)=\frac{ns|a_k|^n}{q}
\prod_{\ell=1}^{k-1}|2T_n(w_\ell)-2|,
$$
\item[(2)] if $\Gamma=\Gamma_2$, then $$\tau(\Gamma_2)=
\frac{|b_k|^\frac{n}{2}|a_k|^\frac{n}{2}ns}{4q}\prod_{\ell=1}^{k}|2T_\frac{n}{2}(v_\ell)+2|\prod_{\ell=1}^{k-1}|2T_\frac{n}{2}(w_\ell)-2|,$$
\item[(3)] if $\Gamma=\Gamma_3$, then $$\tau(\Gamma_3)=
\frac{|c_k|^\frac{n}{2}|a_k|^\frac{n}{2}ns}{4q}\prod_{\ell=1}^{k}|2T_\frac{n}{2}(u_\ell)+2|\prod_{\ell=1}^{k-1}|2T_\frac{n}{2}(w_\ell)-2|,$$
\item[(4)] if $\Gamma=\Gamma_4$, then $$\tau(\Gamma_4)=
\frac{|d_k|^\frac{n}{2}|a_k|^\frac{n}{2}ns}{4q}\prod_{\ell=1}^{k}|2T_\frac{n}{2}(y_\ell)+2|\prod_{\ell=1}^{k-1}|2T_\frac{n}{2}(w_\ell)-2|,$$
\end{itemize}
where $q=s\sum_{j=1}^r\a_j^2+s\sum_{j=1}^t\b_j^2+\sum_{1\le j<i\le s}(\gamma_j-\gamma_i)^2$, and $a_k,b_k, c_k,d_k$ are the leading coefficients of $P_1(z),P_2(z),P_3(z),P_4(z)$, respectively, and $w_\ell,v_\ell,u_\ell,y_\ell$, $\ell=1,2,\ldots, k$, are all the roots of Chebyshev transform of $P_j(z)=0$ for $j=1,2,3,4$, respectively.
\end{theorem}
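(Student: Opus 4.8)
The plan is to apply Kirchhoff's Matrix--Tree Theorem to the $2n\times 2n$ Laplacian $L(\Gamma)=D(\Gamma)-A(\Gamma)$, where $A(\Gamma)$ is the block adjacency matrix of Lemma~\ref{adj}. For $\Gamma_1$ every vertex of the right part has degree $2r+s$ and every vertex of the left part has degree $2t+s$, so $D(\Gamma)$ is scalar on each part and all four blocks of $L(\Gamma)$ are polynomials in the basic circulant $Q=\operatorname{circ}(0,1,0,\ldots,0)$. Since these circulants commute and are simultaneously diagonalized by the Fourier basis, $L(\Gamma)$ is unitarily equivalent to a direct sum of $2\times 2$ blocks
\[
L_j=\begin{pmatrix}\mathcal{A}(\varepsilon_n^{\,j}) & -\mathcal{C}(\varepsilon_n^{\,j})\\[1mm] -\mathcal{C}(\varepsilon_n^{-j}) & \mathcal{B}(\varepsilon_n^{\,j})\end{pmatrix},\qquad \varepsilon_n=e^{2\pi\mathbf{i}/n},\ j=0,1,\ldots,n-1,
\]
and a direct computation (cf. Lemma~\ref{mat}) gives $\det L_j=P_1(\varepsilon_n^{\,j})$.

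Next I would locate the zero eigenvalue. As $\mathcal{A}(1)=\mathcal{B}(1)=\mathcal{C}(1)=s$, the block $L_0=\left(\begin{smallmatrix}s&-s\\-s&s\end{smallmatrix}\right)$ has eigenvalues $0$ and $2s$; for the connected graph $\Gamma$ this is the unique zero Laplacian eigenvalue, with companion $2s$, while Lemma~\ref{one} shows every block $L_j$ with $j\neq0$ is positive definite with $\det L_j=P_1(\varepsilon_n^{\,j})>0$. Kirchhoff's theorem then yields
\[
\tau(\Gamma_1)=\frac{1}{2n}\,(2s)\prod_{j=1}^{n-1}P_1(\varepsilon_n^{\,j})=\frac{s}{n}\prod_{j=1}^{n-1}P_1(\varepsilon_n^{\,j}).
\]
By Lemma~\ref{two}, $P_1(1)=P_1'(1)=0$ and $P_1''(1)=-2q$, so $z=1$ is a double root of $P_1$; applying Lemma~\ref{che}, substituting $P_1''(1)=-2q$, and passing to absolute values (recall $\tau>0$), the surplus power of $2$ merges into the factors $|2T_n(w_\ell)-2|$ and produces formula~(1).

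For $\Gamma_2,\Gamma_3,\Gamma_4$ the integer $n$ is even and one or both of $R,T$ contains the self-paired element $n/2$, whose symbol is $z^{n/2}=(-1)^j$ at $z=\varepsilon_n^{\,j}$. The accompanying increase of each relevant degree by $1$ exactly cancels $(-1)^j$ when $j$ is even but adds $2$ when $j$ is odd; hence $\det L_j=P_1(\varepsilon_n^{\,j})$ for even $j$ and $\det L_j=P_2(\varepsilon_n^{\,j})$ (respectively $P_3,P_4$) for odd $j$. The even indices $j=2m$ sweep out the $(n/2)$-th roots of unity $\varepsilon_{n/2}^{\,m}$, and their nonzero contribution $2s\prod_{m=1}^{n/2-1}P_1(\varepsilon_{n/2}^{\,m})$ is treated exactly as in Case~(1) with $n$ replaced by $n/2$, giving $\prod_{\ell=1}^{k-1}|2T_{n/2}(w_\ell)-2|$. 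The odd indices $j=2m+1$ sweep out the roots of $z^{n/2}=-1$, over which I must evaluate the product of $P_2$ (respectively $P_3,P_4$).

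The one genuinely new ingredient is an analogue of Lemma~\ref{ch} for a product over the roots of $z^{n/2}=-1$. Writing $P_2(z)=\tfrac{b_k}{z^{k}}\prod_{\ell=1}^{k}(z-z_\ell)(z-z_\ell^{-1})$ and using $\prod_{\zeta^{m}=-1}(\zeta-z_\ell)=(-1)^{m}(z_\ell^{m}+1)$ together with $(z_\ell^{m}+1)(z_\ell^{-m}+1)=2\bigl(T_m(v_\ell)+1\bigr)$ from Lemma~\ref{chee} (with $m=n/2$ and $v_\ell=\tfrac12(z_\ell+z_\ell^{-1})$), I would obtain
\[
\Bigl|\prod_{m=0}^{n/2-1}P_2(\varepsilon_n^{\,2m+1})\Bigr|=|b_k|^{n/2}\prod_{\ell=1}^{k}\bigl|2T_{n/2}(v_\ell)+2\bigr|,
\]
where Lemma~\ref{one} (giving $P_2>0$ on the unit circle) guarantees that no factor vanishes and that all $k$ pairs contribute. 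Multiplying the even and odd parts, dividing by $2n$, and simplifying yields formula~(2); formulas~(3) and~(4) follow in the same way with $(P_2,v_\ell,b_k)$ replaced by $(P_3,u_\ell,c_k)$ and $(P_4,y_\ell,d_k)$. I expect the main obstacle to be the bookkeeping of this last step: deriving the $z^{n/2}=-1$ product cleanly and keeping track of the sign $(-1)^{nk/2}$ arising from $\prod_m\varepsilon_n^{\,2m+1}=(-1)^{n/2}$ together with the various powers of $2$, so that after taking absolute values everything collapses to the stated closed forms with the correct prefactor $\tfrac{ns}{4q}$.
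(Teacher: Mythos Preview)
Your proposal is correct and follows the same overall architecture as the paper: block-diagonalize $L(\Gamma)$ via the Fourier basis, identify $\det L_j$ with $P_1(\varepsilon_n^{\,j})$ (or $P_2,P_3,P_4$ according to the parity of $j$ in cases (2)--(4)), apply Kirchhoff, and then convert the resulting products into Chebyshev form using Lemmas~\ref{che} and~\ref{two} for the even-index part.

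The one place you diverge from the paper is in evaluating $\prod_{m=0}^{n/2-1}P_2(\varepsilon_n^{\,2m+1})$. The paper does not factor $P_2$ and compute the product over the roots of $z^{n/2}+1$ directly; instead it writes this product as the ratio $\prod_{j=0}^{n-1}P_2(\varepsilon_n^{\,j})\big/\prod_{j=0}^{n/2-1}P_2(\varepsilon_{n/2}^{\,j})$, applies Lemma~\ref{ch} separately to numerator and denominator, and then collapses the quotient using the half-angle identity $T_n(v)-1=2\bigl(T_{n/2}(v)-1\bigr)\bigl(T_{n/2}(v)+1\bigr)$. Your direct route via $\prod_{\zeta^{m}=-1}(\zeta-z_\ell)=(-1)^m(z_\ell^{m}+1)$ and $(z_\ell^{m}+1)(z_\ell^{-m}+1)=2T_m(v_\ell)+2$ is shorter and avoids both the artificial ratio and the half-angle identity, at the cost of introducing one elementary resultant identity not stated among the paper's lemmas; the paper's route has the virtue of using only Lemma~\ref{ch} as a black box. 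Either way the sign bookkeeping you flag resolves exactly as you describe once absolute values are taken.
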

\begin{proof}
(1)  By Lemma \ref{adj}, we obtain that the adjacency matrix of the graph $\Gamma_1$ is given by the
$2n \times 2n$ block matrix
$$
A(\Gamma_1)=\begin{pmatrix}
\sum_{j=1}^r\left(Q^{\a_j}+Q^{-\a_j}\right)& \sum_{j=1}^s Q^{\gamma_j} \\
\sum_{j=1}^s Q^{-\gamma_j} & \sum_{j=1}^t\left(Q^{\beta_j}+Q^{-\beta_j}\right)
\end{pmatrix},
$$
where $Q=\operatorname{circ}(0,1,0, \ldots, 0)$. Let $L(\Gamma_1)$ be the Laplacian matrix of $\Gamma_1$. Then
 $$
L(\Gamma_1)=
\begin{pmatrix}
 k_1I_n-\sum_{j=1}^r\left(Q^{\a_j}+Q^{-\a_j}\right) & -\sum_{j=1}^s Q^{\gamma_j}\\
 -\sum_{j=1}^s Q^{-\gamma_j} & k_2I_n-\sum_{j=1}^t\left(Q^{\beta_j}+Q^{-\beta_j}\right)
 \end{pmatrix},
 $$
 where $k_1=2r+s$ and $k_2=2t+s$.
 Since the eigenvalues of circulant matrix $Q$ are $1, \V_n, \ldots,\V_n^{n-1}$,
there exists an invertible matrix $P$ such that $P^{-1}QP=\mathcal{K}_n={\rm diag}(1,\V_n,\ldots,\V_n^{n-1})$ with $\V_n=\exp(\frac{2\pi \mathbf{i}}{n})$.
Then we have
$$
\begin{pmatrix}
P^{-1} & 0 \\
0 & P^{-1}
\end{pmatrix}
L(\Gamma_1)
\begin{pmatrix}
P & 0 \\
0 & P
\end{pmatrix}=
\begin{pmatrix}
\mathcal{A}(\mathcal{K}_n) & \mathcal{C}(\mathcal{K}_n) \\
\mathcal{C}(\mathcal{K}_n^{-1}) & \mathcal{B}(\mathcal{K}_n)
\end{pmatrix}.$$
Hence $L(\Gamma_1)$ and $\mathcal{L}=
\begin{pmatrix}
\mathcal{A}(\mathcal{K}_n) & \mathcal{C}(\mathcal{K}_n) \\
\mathcal{C}(\mathcal{K}_n^{-1}) & \mathcal{B}(\mathcal{K}_n)
\end{pmatrix}$ have the same eigenvalues. Suppose $\lambda$ is an eigenvalue of $\mathcal{L}$. Then
$$
0=
\det\begin{pmatrix}
\mathcal{L}-\lambda I_{2n}
\end{pmatrix}
=
\det\begin{pmatrix}
\mathcal{A}(\mathcal{K}_n)-\lambda I_{n} & \mathcal{C}(\mathcal{K}_n) \\
\mathcal{C}(\mathcal{K}_n^{-1}) & \mathcal{B}(\mathcal{K}_n)-\lambda I_{n}
\end{pmatrix}.
$$
Since $(\mathcal{A}(\mathcal{K}_n)-\lambda I_{n})\mathcal{C}(\mathcal{K}_n^{-1})=\mathcal{C}(\mathcal{K}_n^{-1})(\mathcal{A}(\mathcal{K}_n)-\lambda I_{n})$,
by Lemma \ref{mat}, we have
$$
 \begin{aligned}
0&=
\det\begin{pmatrix}
(\mathcal{A}(\mathcal{K}_n)-\lambda I_{n})( \mathcal{B}(\mathcal{K}_n)-\lambda I_{n})-\mathcal{C}(\mathcal{K}_n^{-1})\mathcal{C}(\mathcal{K}_n)
\end{pmatrix}\\
&=
\det\begin{pmatrix}
\lambda^2 I_n -\lambda I_n(\mathcal{A}(\mathcal{K}_n)+\mathcal{B}(\mathcal{K}_n))+
\mathcal{A}(\mathcal{K}_n)\mathcal{B}(\mathcal{K}_n)-\mathcal{C}(\mathcal{K}_n^{-1})\mathcal{C}(\mathcal{K}_n)
\end{pmatrix}.
 \end{aligned}
$$
Hence, $\lambda$ is a root of the quadratic equation
$$
\lambda^2-\left(\mathcal{A}\left(\varepsilon_n^j\right)+\mathcal{B}\left(\varepsilon_n^{j}\right)\right)\lambda
+\mathcal{A}\left(\varepsilon_n^j\right) \mathcal{B}\left(\varepsilon_n^{j}\right)-\mathcal{C}\left(\varepsilon_n^{-j}\right) \mathcal{C}\left(\varepsilon_n^{j}\right)=0,
$$
where $j=0,1,2,\ldots,n-1$. The solutions of this equation are $\lambda_{j,1},\lambda_{j,2}$ for $j=0,1,2,\ldots,n-1$.
Then we have
$$
\lambda_{j,1}\lambda_{j,2}=P_1(\V_n^{j})=\mathcal{A}\left(\varepsilon_n^j\right) \mathcal{B}\left(\varepsilon_n^{j}\right)-\mathcal{C}\left(\varepsilon_n^{-j}\right) \mathcal{C}\left(\varepsilon_n^{j}\right).
$$
Since $\lambda_{0,1}\lambda_{0,2}=P_1(1)=\mathcal{A}(1)\mathcal{B}(1)-\mathcal{C}(1)\mathcal{C}(1)=0$, we have
$\lambda_{0,1}=0$ and $\lambda_{0,2}=2s$.
Therefore,
$$
\tau(\Gamma_1)=\frac{2s\prod_{j=1}^{n-1}\lambda_{j,1}\lambda_{j,2}}{2n}=\frac{s}{n}\prod_{j=1}^{n-1} P_1\left(\varepsilon_n^j\right).
$$
Note that $P_1(z)=P_1(z^{-1})$ and 
$1$ is a root of $P_1(z)$ with multiplicity 2. By Lemmas \ref{che} and \ref{two}, we have
$$
\prod_{j=1}^{n-1} P_1\left(\varepsilon_{n}^{j}\right)
 =\frac{a_k}{-2q_1}(-1)^{nk-1}a_k^{n-1}n^2 2^k\prod_{\ell=1}^{k-1}
(T_n(w_\ell)-1),
$$
where $s\sum_{j=1}^{r}\a_j^2+s\sum_{j=1}^{t}\b_j^2+\sum_{1\le j<k\le s}(\gamma_j-\gamma_k)^2$.
Hence
$$
\tau(\Gamma_1)=\frac{(-1)^{nk}na_k^ns}{q_1}
\prod_{\ell=1}^{k-1}(2T_n(w_\ell)-2).
$$
Since $\tau(\Gamma_1)$ is a positive integer, the result follows.

\medskip

(2) Let $\lambda$ be the eigenvalue of $L(\Gamma_2)$. Similarly as in the proof of $(1)$,
$\lambda$ is a root of the quadratic equation
$$
\lambda^2-\left(\mathcal{A}\left(\varepsilon_n^j\right)+1-\V_n^{\frac{jn}{2}}+\mathcal{B}\left(\varepsilon_n^{j}\right)\right)\lambda
+\left(\mathcal{A}\left(\varepsilon_n^j\right)+1-\V_n^{\frac{jn}{2}}\right) \mathcal{B}\left(\varepsilon_n^{j}\right)-\mathcal{C}\left(\varepsilon_n^{-j}\right) \mathcal{C}\left(\varepsilon_n^{j}\right)=0,
$$
where $j=0,1,2,\ldots,n-1$. The solutions of this equation are $\lambda_{j,1},\lambda_{j,2}$ for $j=0,1,2,\ldots,n-1$.
Then we have
$$
\lambda_{j,1}\lambda_{j,2}=\left(\mathcal{A}\left(\varepsilon_n^j\right)+1-\V_n^{\frac{jn}{2}}\right) \mathcal{B}\left(\varepsilon_n^{j}\right)-\mathcal{C}\left(\varepsilon_n^{-j}\right) \mathcal{C}\left(\varepsilon_n^{j}\right).
$$
Note that $\lambda_{j,1}\lambda_{j,2}=P_1(\V_n^j)=\mathcal{A}(\varepsilon_n^j) \mathcal{B}(\varepsilon_n^{j})-\mathcal{C}(\varepsilon_n^{-j}) \mathcal{C}(\varepsilon_n^{j})$
for even $j$, and
$\lambda_{j,1}\lambda_{j,2}=P_2(\V_n^j)=\left(\mathcal{A}(\varepsilon_n^j)+2\right) \mathcal{B}(\varepsilon_n^{j})-\mathcal{C}(\varepsilon_n^{-j}) \mathcal{C}(\varepsilon_n^{j})$
for odd $j$. Since $\lambda_{0,1}\lambda_{0,2}=P_1(1)=\mathcal{A}(1)\mathcal{B}(1)-\mathcal{C}(1)\mathcal{C}(1)=0$, we have
$\lambda_{0,1}=0$ and $\lambda_{0,2}=2s$.
Therefore,
$$
\begin{aligned}
\tau(\Gamma_2)&=\frac{s}{n}\prod_{j=1}^{n-1}\lambda_{j,1}\lambda_{j,2}
=\frac{s}{n}\prod_{j=0}^{\frac{n}{2}-1}P_{2}(\V_n^{2j+1})\prod_{j=1}^{\frac{n}{2}-1}P_{1}(\V_n^{2j})\\
&=\frac{s}{n}\frac{\prod_{j=0}^{n-1}P_{2}(\V_n^j)}{\prod_{j=0}^{\frac{n}{2}-1}P_{2}(\V_n^{2j})}\prod_{j=1}^{\frac{n}{2}-1}P_{1}(\V_n^{2j})\\
&=\frac{s}{n}\frac{\prod_{j=0}^{n-1}P_{2}(\V_n^j)}{\prod_{j=0}^{\frac{n}{2}-1}P_{2}(\V_{\frac{n}{2}}^{j})}\prod_{j=1}^{\frac{n}{2}-1}P_{1}(\V_\frac{n}{2}^{j})\\
&=\frac{s}{n}\frac{\prod_{j=1}^{n-1}P_{2}(\V_n^j)}{\prod_{j=1}^{\frac{n}{2}-1}P_{2}(\V_{\frac{n}{2}}^{j})}\prod_{j=1}^{\frac{n}{2}-1}P_{1}(\V_\frac{n}{2}^{j}),\\
\end{aligned}
$$
where $P_1(z)$ and $P_2(z)$ are defined in $(3)$ and $(4)$, respectively. By Lemma \ref{one}, 
$P_2(1)\not=0$. 
Based on the discussion in (1) and Lemma \ref{ch}, we have
\begin{itemize}
\item[(i)] $\prod_{j=1}^{n-1}P_2(\V_n^j)=(-1)^{nk-k}b_k^{n-1}
\prod_{\ell=1}^{k}\frac{T_n(v_\ell)-1}{v_\ell-1}$,
\item[(ii)] $\prod_{j=1}^{\frac{n}{2}-1}P_2(\V_{\frac{n}{2}}^j)=(-1)^{\frac{nk}{2}-k}b_k^{\frac{n}{2}-1}
\prod_{\ell=1}^{k}\frac{T_\frac{n}{2}(v_\ell)-1}{v_\ell-1}$,
\item[(iii)] $\prod_{j=1}^{\frac{n}{2}-1}P_1(\V_\frac{n}{2}^j)=\frac{(-1)^{\frac{nk}{2}}a_k^\frac{n}{2}n^2}{4q_1}\prod_{\ell=1}^{k-1}(2T_\frac{n}{2}(w_\ell)-2)$,
\end{itemize}
where $b_k$ and $a_k$ are the leading coefficients of $P_2(z)$ and $P_1(z)$, respectively, and $v_\ell$ and $w_\ell$, $\ell=1,2,\ldots,k$,
are all the roots of Chebyshev transform of $P_2(z)=0$ and $P_1(z)=0$, respectively.
Since $T_n\left(v_\ell\right)-1=2(T_\frac{n}{2}\left(v_\ell\right)-1)(T_\frac{n}{2}\left(v_\ell\right)+1)$, we have
$$
\begin{aligned}
\tau(\Gamma_2)&=\frac{s}{n}\frac{(-1)^{nk-k}b_k^{n-1}\prod_{\ell=1}^{k}(T_n(v_{\ell})-1)}
{(-1)^{\frac{nk}{2}-k}b_k^{\frac{n}{2}-1}\prod_{\ell=1}^{k}(T_\frac{n}{2}(v_\ell)-1)}
\frac{(-1)^{\frac{nk}{2}}a_k^\frac{n}{2}n^2}{4q_1}\prod_{\ell=1}^{k-1}(2T_\frac{n}{2}(w_\ell)-2)\\
&=\frac{(-1)^{nk}b_k^\frac{n}{2}a_k^\frac{n}{2}ns}{4q_1}\prod_{\ell=1}^{k}(2T_\frac{n}{2}(v_\ell)+2)\prod_{\ell=1}^{k-1}(2T_\frac{n}{2}(w_\ell)-2).
\end{aligned}
$$
Since $\tau(\Gamma_2)$ is a positive integer, the result follows.

\medskip

Similarly, we can prove that  both $(3)$ and $(4)$ hold.
\end{proof}

\section{Arithmetic properties of the number of spanning trees}
Recall that any positive integer $u$ can be uniquely represented in the form $u=vr^2$,
where $u$ and $v$ are positive integers and $v$ is square-free. We will call $v$ the \textit{square-free part} of $u$.
The main result of this section is the following theorem.

\begin{theorem}\label{th2}
Let $\tau_{\Gamma}(2n)$ be the number of  spanning trees of bicirculant graph $\Gamma$.
Denote by $k_1$ (resp. $k_2$) the number of odd
numbers (resp. even numbers) in $\{\a_1, \a_2, \ldots, \a_r\}$. Denote by $m_1$ (resp. $m_2$) the
number of odd numbers (resp. even numbers) in  $\{\b_1, \b_2, \ldots, \b_t\}$.
Denote by $h_1$ (resp. $h_2$) the
number of odd numbers (resp. even numbers) in  $\{\gamma_1, \gamma_2, \ldots, \gamma_s\}$.
Then we have
\begin{itemize}
\item[(1)] if $\Gamma=\Gamma_1$, then there exist two integer sequences $a_1(n)$ and $b_1(n)$ such that
\begin{eqnarray}
\tau(\Gamma_1)=\begin{cases}
nsa_1(n)^2, &n~is~odd,\\
nsq_1 b_1(n)^2, &n~is~even,\\
\end{cases}
\nonumber
\end{eqnarray}

\item[(2)] if $\Gamma=\Gamma_2$, then there exist two integer sequences $a_2(n)$ and $b_2(n)$ such that
\begin{eqnarray}
\tau(\Gamma_2)=\begin{cases}
\frac{1}{4}nsq_2 a_2(n)^2, &\frac{n}{2}~is~odd,\\
\frac{1}{4}nsq_1 b_2(n)^2, &\frac{n}{2}~is~even,\\
\end{cases}
\nonumber
\end{eqnarray}

\item[(3)] if $\Gamma=\Gamma_3$, then there exist two integer sequences $a_3(n)$ and $b_3(n)$ such that
\begin{eqnarray}
\tau(\Gamma_3)=\begin{cases}
\frac{1}{4}nsq_3 a_3(n)^2, &\frac{n}{2}~is~odd,\\
\frac{1}{4}nsq_1b_3(n)^2, &\frac{n}{2}~is~even,\\
\end{cases}
\nonumber
\end{eqnarray}

\item[(4)] if $\Gamma=\Gamma_4$, then there exist two integer sequences $a_4(n)$ and $b_4(n)$ such that
\begin{eqnarray}
\tau(\Gamma_4)=\begin{cases}
\frac{1}{4}nsq_4 a_4(n)^2, &\frac{n}{2}~is~odd,\\
\frac{1}{4}nsq_1 b_4(n)^2, &\frac{n}{2}~is~even,\\
\end{cases}
\nonumber
\end{eqnarray}
\end{itemize}
where $q_1,q_2,q_3,q_4$ are the square-free parts of $(4k_1+s)(4m_1+s)-(h_2-h_1)^2,
(4k_1+s+2)(4m_1+s)-(h_2-h_1)^2,(4k_1+s)(4m_1+s+2)-(h_2-h_1)^2,(4k_1+s+2)(4m_1+s+2)-(h_2-h_1)^2$, respectively.
\end{theorem}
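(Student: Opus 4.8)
The plan is to feed the closed formulas of Theorem~\ref{th1} into the parity-sensitive factorizations of the Chebyshev polynomials and then simply read off the square part together with the single leftover non-square factor. Writing $w_\ell=\tfrac12(z_\ell+z_\ell^{-1})$ and invoking Lemma~\ref{chee}, one has $2T_m(w)-2=z^m+z^{-m}-2=(z^{m/2}-z^{-m/2})^2$ and $2T_m(w)+2=(z^{m/2}+z^{-m/2})^2$, so each individual factor occurring in Theorem~\ref{th1} is already a square. The real content is to decide which \emph{global} factor survives as a non-square after multiplying over $\ell$. For that I would use the root factorizations
$$
T_m(w)-1=
\begin{cases}
2^{m-1}(w-1)\prod_{j=1}^{(m-1)/2}\bigl(w-\cos\tfrac{2\pi j}{m}\bigr)^2, & m\ \text{odd},\\
2^{m-1}(w-1)(w+1)\prod_{j=1}^{m/2-1}\bigl(w-\cos\tfrac{2\pi j}{m}\bigr)^2, & m\ \text{even},
\end{cases}
$$
together with the analogous identity for $T_m(w)+1$, in which the boundary factor $(w+1)$ appears exactly when $m$ is odd (and is absent when $m$ is even). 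The point is that the interior product is a perfect square, and $2^{m-1}$ is a perfect square precisely when $m$ is odd; hence all non-squareness is concentrated in the boundary factors $(w-1)$, $(w+1)$ and in a stray power of $2$, whose presence is dictated solely by the parity of $m$.

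The two evaluations that make everything collapse are the following. First, with $K_1(w)$ the Chebyshev transform of $P_1$, one computes $K_1(w)=2^k a_k(w-1)\prod_{\ell=1}^{k-1}(w-w_\ell)$ and, comparing Taylor expansions at $z=1$, $K_1'(1)=P_1''(1)=-2q$ (Lemma~\ref{two}), where $q$ is the constant of Theorem~\ref{th1}. Consequently $\prod_{\ell=1}^{k-1}(w_\ell-1)$ equals $q/a_k$ up to sign and a power of $2$, so when the boundary factor $(w-1)$ is substituted it \emph{cancels the denominator $q$} of Theorem~\ref{th1} and lowers the power of $a_k$ by one. Second, $K_1(-1)=P_1(-1)$, so the boundary factor $(w+1)$ contributes $\prod_{\ell}(w_\ell+1)$, which is proportional to $P_1(-1)$. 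Since $\mathcal{A}(-1)=4k_1+s$, $\mathcal{B}(-1)=4m_1+s$ and $\mathcal{C}(-1)=h_2-h_1$, one gets $P_1(-1)=(4k_1+s)(4m_1+s)-(h_2-h_1)^2$, whose square-free part is precisely $q_1$ (and likewise $P_j(-1)$ for $q_j$). For $\Gamma_1$ this gives the result at once: for odd $n$ only the $(w-1)$ factor appears, $2^{n-1}$ and $a_k^{\,n-1}$ are both squares, and after the cancellation of $q$ the whole expression is a perfect square, so $\tau(\Gamma_1)=ns\,a_1(n)^2$; for even $n$ the extra $(w+1)$ factor injects exactly $P_1(-1)$, whence $\tau(\Gamma_1)=ns\,q_1\,b_1(n)^2$.

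For $\Gamma_2,\Gamma_3,\Gamma_4$ the governing index is $n/2$, and the spanning-tree product splits into a $v_\ell$-part coming from $P_j$ (through $T_{n/2}(v)+1$) and a $w_\ell$-part coming from $P_1$ (through $T_{n/2}(w)-1$). I would run the same extraction on each part and let the parity of $n/2$ select the boundary factors: when $n/2$ is odd, the factor $T_{n/2}(v)+1$ carries $(v+1)$, producing $P_j(-1)$ and hence $q_j$, while $T_{n/2}(w)-1$ carries only $(w-1)$; when $n/2$ is even, $T_{n/2}(v)+1$ carries no boundary factor (only a stray $2$), whereas $T_{n/2}(w)-1$ carries $(w+1)$, producing $P_1(-1)$ and hence $q_1$. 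This reproduces exactly the dichotomy in cases (2)--(4), and the uniform prefactor $\tfrac14$ drops out of the $2$-adic bookkeeping of the residual powers of $2$ (the contribution of $\V_n^{n/2}=-1$ and of $2^{m-1}$ with $m=n/2$ even).

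The \textbf{main obstacle} is not the square \emph{structure}, which the factorizations hand to us, but the integrality of the sequences $a_j(n),b_j(n)$: the Chebyshev manipulation only shows that $\tau/(ns)$ (resp.\ $\tau/(nsq_j)$) is the square of a \emph{rational} number. To upgrade this to an integer square I would return to $\tau(\Gamma_1)=\tfrac{s}{n}\prod_{j=1}^{n-1}P_1(\V_n^j)$ and argue that the relevant ``half product'' $\prod_{1\le j\le(n-1)/2}P_1(\V_n^j)$ is an algebraic integer which is fixed by $\mathrm{Gal}(\mathbb{Q}(\V_n)/\mathbb{Q})$, because multiplication by a unit of $\mathbb{Z}/n\mathbb{Z}$ permutes the conjugate pairs $\{\V_n^j,\V_n^{-j}\}$ on which $P_1$ takes equal values; being a rational algebraic integer it lies in $\mathbb{Z}$. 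The exact power of $n$ (and the factor $\tfrac14$) then follows from the double root of $P_1$ at $z=1$ via $\prod_{j=1}^{n-1}(\V_n^j-1)=\pm n$, which forces the requisite divisibility by $n$. I expect the delicate part to be this combined $2$-adic and $n$-divisibility accounting in the even cases, where the unpaired middle eigenvalue $P_j(-1)=\lambda_{n/2}$ and the boundary factors interact; once that is settled, the identification of $q_j$ as the square-free part of $P_j(-1)$ completes the proof.
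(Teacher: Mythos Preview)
Your plan is correct and ultimately lands on the same ingredients, but the route differs from the paper's. The paper never passes through the root factorizations of $T_m(w)\pm 1$; it works directly with the eigenvalue product $\tau(\Gamma_1)=\tfrac{s}{n}\prod_{j=1}^{n-1}P_1(\varepsilon_n^j)$ and folds it into a square via the elementary symmetry $P_1(\varepsilon_n^j)=P_1(\varepsilon_n^{n-j})$, picking up the single unpaired factor $P_1(-1)$ when $n$ is even. Your Chebyshev factorization encodes exactly this pairing in the $w$-variable (the double roots $\cos(2\pi j/n)$ are the images of the pairs $\{\varepsilon_n^j,\varepsilon_n^{-j}\}$), so the two arguments are dual. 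The paper's version is shorter because it never has to confront the ``interior product'' $\prod_\ell\prod_j(w_\ell-\cos\tfrac{2\pi j}{n})$, whose rationality is not obvious; you rightly notice this and retreat to the half-product $\prod_{j\le (n-1)/2}P_1(\varepsilon_n^j)$, which is where the paper begins.

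On integrality you and the paper agree in spirit: the half-product is a rational integer because the Galois group permutes the values $P_1(\varepsilon_n^j)$ (the paper cites \cite{DL}), and separately $\tau(\Gamma_1)/(ns)\in\mathbb{Z}$. For the latter the paper gives a tighter argument than your double-root sketch: it rewrites $\tau(\Gamma_1)/(ns)=\pm\mathrm{Res}\bigl((z^n-1)/(z-1),\,a_kC_1(z)\bigr)$ with $a_kC_1(z)=z^kP_1(z)/(z-1)^2$ an integer polynomial, so the resultant is manifestly in $\mathbb{Z}$. Combining ``$a_1(n)\in\mathbb{Q}$'' with ``$a_1(n)^2\in\mathbb{Z}$'' then forces $a_1(n)\in\mathbb{Z}$. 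Cases $\Gamma_2,\Gamma_3,\Gamma_4$ go the same way, with the extra boundary contribution $P_j(-1)$ producing $q_j$ exactly when $n/2$ is odd, as you describe; the paper obtains the analogue of the resultant step by writing $\prod_{j}P_2(\varepsilon_n^{2j+1})$ and $\prod_jP_1(\varepsilon_{n/2}^j)$ as resultants against $z^{n/2}+1$ and $(z^{n/2}-1)/(z-1)$ respectively.
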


\begin{proof}
From the proof of Theorem \ref{th1}(1), we have
$$
\tau(\Gamma_1)=\frac{s}{n}\sum_{j=1}^n\lambda_{j,1}\lambda_{j,2}=\frac{s}{n}\prod_{j=1}^{n-1} P_1\left(\varepsilon_n^j\right)
$$
and $1$ is the root of $P_1(z)=0$ with  multiplicity 2. Since $\lambda_{j,1}\lambda_{j,2}=P_1(\V_n^j)=\lambda_{n-j,1}\lambda_{n-j,2}$,
we have $\tau(\Gamma_1)=\frac{s}{n}\left( \sum_{j=1}^{\frac{n-1}{2}}\lambda_{j,1}\lambda_{j,2}\right)^2$ if $n$ is odd,
and $\tau(\Gamma_1)=\frac{s}{n}\lambda_{\frac{n}{2},1}\lambda_{\frac{n}{2},2}
\left( \sum_{j=1}^{\frac{n}{2}-1}\lambda_{j,1}\lambda_{j,2}\right)^2$ if $n$ is even.
Then we have
$$
\begin{aligned}
\lambda_{\frac{n}{2},1}\lambda_{\frac{n}{2},2}&=P_1(-1)=\mathcal{A}(1)\mathcal{B}(1)-\mathcal{C}(1)\mathcal{C}(1)\\
&=\left(2r+s-(2k_2-2k_1)\right)\left(2t+s-(2m_2-2m_1)\right)-(h_2-h_1)^2\\
&=(4k_1+s)(4m_1+s)-(h_2-h_1)^2.
\end{aligned}
$$ Since each algebraic number $\lambda_{i,j}$ comes into both products $\prod_{j=1}^{\frac{n}{2}-1}\lambda_{j,1}\lambda_{j,2}$ and  $\prod_{j=1}^{\frac{n-1}{2}}\lambda_{j,1}\lambda_{j,2}$ with all of its Galois conjugate elements \cite{DL}, we have $\prod_{j=1}^{\frac{n}{2}-1}\lambda_{j,1}\lambda_{j,2}$ and $\prod_{j=1}^{\frac{n-1}{2}}\lambda_{j,1}\lambda_{j,2}$ are integers. Since $1$ is the root of $P_1(z)=0$ with  multiplicity 2, we have $P_1(z)=\frac{a_k}{z^k}(z-1)^2C_1(z)$, where $C_1(z)=\prod_{j=1}^{k-1}(z-z_j)(z-z_j^{-1})$, and
$z_\ell,z_\ell^{-1}$, $\ell=1,2,\ldots,k-1$, are all the roots different from $1$ of $P_1(z)=0$.
Hence
$$
\begin{aligned}
\frac{\tau(\Gamma_1)}{ns}&=\frac{1}{n^2}\prod_{j=1}^{n-1} P_1\left(\varepsilon_n^j\right)=\frac{(-1)^{nk-k}a_k^{n-1}}{n^2}\prod_{j=1}^{n-1}(\V_n^j-1)^2 \prod_{j=1}^{n-1}C_1(\V_n^j)\\
&=\frac{(-1)^{nk-k}a_k^{n-1}}{n^2}\left(\lim_{z\to1}\frac{z^n-1}{z-1}\right)^2
\prod_{\ell=1}^{k-1}\frac{z_\ell^n-1}{z_\ell-1}\\
&=(-1)^{nk-k}a_k^{n-1}\prod_{\ell=1}^{k-1}\frac{z_\ell^n-1}{z_\ell-1}\\
&=(-1)^{nk-k}\mathrm{Res}(a_kC_1(z), \frac{z^n-1}{z-1}).
\end{aligned}
$$
Note that $a_kC_1(z)$ and $\frac{z^n-1}{z-1}$ are polynomials with integer coefficients. Then $\mathrm{Res}(a_kC_1(z), \frac{z^n-1}{z-1})$
is an integer and hence $\frac{\tau(\Gamma_1)}{ns}$ is an integer.
Let $q_1$ be the square-free part of $(4k_1+s)(4m_1+s)-(h_2-h_1)^2$. Then $(4k_1+s)(4m_1+s)-(h_2-h_1)^2=q_1u_1^2$, where $u_1$ is
an integer. Therefore,
\begin{eqnarray}
\frac{\tau(\Gamma_1)}{ns}=\begin{cases}
\left( \frac{\sum_{j=1}^{\frac{n-1}{2}}\lambda_{j,1}\lambda_{j,2}}{n}\right)^2, &n~is~odd,\\
q_1\left(\frac{u_1\sum_{j=1}^{\frac{n}{2}-1}\lambda_{j,1}\lambda_{j,2}}{n}\right)^2, &n~is~even.\\
\end{cases}
\nonumber
\end{eqnarray}
Setting $a_1(n)=\frac{\sum_{j=1}^{\frac{n-1}{2}}\lambda_{j,1}\lambda_{j,2}}{n}$. Then $a_1(n)=\frac{b}{a}$ is a rational number,
where $a,b$ are integers and $\gcd(a,b)=1$. Since $a_1(n)^2=\frac{b^2}{a^2}$ is an integer, $a^2$ divides $b^2$. Since
$\gcd(a,b)=1$, we obtain $a=1$. Hence $a_1(n)$ is an integer. Setting $b_1(n)=\frac{u_1\sum_{j=1}^{\frac{n}{2}-1}\lambda_{j,1}\lambda_{j,2}}{n}$.
By the same argument, we have that $b_1(n)$ is an integer.

\medskip

(2) From the proof of Theorem \ref{th1}(2), we have
$$
\tau(\Gamma_2)=\frac{s}{n}\sum_{j=1}^n\lambda_{j,1}\lambda_{j,2}
=\frac{s}{n}\prod_{j=0}^{\frac{n}{2}-1}P_{2}(\V_n^{2j+1})\prod_{j=1}^{\frac{n}{2}-1}P_{1}(\V_\frac{n}{2}^{j}).
$$
Note that $\lambda_{j,1}\lambda_{j,2}=P_1(\V_n^j)=P_1(\V_n^{n-j})=\lambda_{n-j,1}\lambda_{n-j,2}$ if $j$ is even, and
$\lambda_{j,1}\lambda_{j,2}=P_2(\V_n^j)=P_2(\V_n^{n-j})=\lambda_{n-j,1}\lambda_{n-j,2}$ if $j$ is odd.
Then $\lambda_{j,1}\lambda_{j,2}=\lambda_{n-j,1}\lambda_{n-j,2}$ for $j=1,2,\ldots,n-1$.
Hence $\tau(\Gamma_2)=\frac{s}{n}\lambda_{\frac{n}{2},1}\lambda_{\frac{n}{2},2}
\left( \sum_{j=1}^{\frac{n}{2}-1}\lambda_{j,1}\lambda_{j,2}\right)^2$, where
$\lambda_{\frac{n}{2},1}\lambda_{\frac{n}{2},2}=(4k_1+s+2)(4m_1+s)-(h_2-h_1)^2$ for odd $\frac{n}{2}$, and
$\lambda_{\frac{n}{2},1}\lambda_{\frac{n}{2},2}=(4k_1+s)(4m_1+s)-(h_2-h_1)^2$ for even $\frac{n}{2}$.
Since each algebraic number $\lambda_{i,j}$ comes into the product
 $\prod_{j=1}^{\frac{n}{2}-1}\lambda_{j,1}\lambda_{j,2}$ with all of its Galois conjugate elements \cite{DL},
 we have $\prod_{j=1}^{\frac{n}{2}-1}\lambda_{j,1}\lambda_{j,2}$ is an integer.
 Note that $P_2(1)\not=0$. By Lemma \ref{book}, we obtain 
 $$
 \begin{aligned}\prod_{j=0}^{\frac{n}{2}-1}P_{2}(\V_n^{2j+1})&=(-1)^{\frac{nk}{2}}\mathrm{Res}(z^\frac{n}{2}+1,z^kP_2(z)),\\
 \prod_{j=1}^{\frac{n}{2}-1}P_{1}(\V_\frac{n}{2}^{j})&=(-1)^{\frac{nk}{2}-k}\frac{n^2}{4}\mathrm{Res}(\frac{z^\frac{n}{2}-1}{z-1},\frac{z^kP_1(z)}{(z-1)^2}).
 \end{aligned}
 $$
 Then $\frac{4\tau(\Gamma_2)}{ns}=(-1)^{nk-k}\mathrm{Res}(z^\frac{n}{2}+1,z^kP_2(z))
 \mathrm{Res}(\frac{z^\frac{n}{2}-1}{z-1},\frac{z^kP_1(z)}{(z-1)^2})$. Since $z^\frac{n}{2}+1, z^kP_2(z),
 \frac{z^\frac{n}{2}-1}{z-1}$ and $\frac{z^kP_1(z)}{(z-1)^2}$
 are polynomials with integer coefficients, $\mathrm{Res}(z^kP_2(z),z^\frac{n}{2}+1)$ and $
 \mathrm{Res}(z^kP_1(z),\frac{z^\frac{n}{2}-1}{z-1})$ are integers and hence $\frac{4\tau(\Gamma_2)}{ns}$ is an integer.
 Let $q_2$ and $q_1$ be the square-free parts of $(4k_1+s+2)(4m_1+s)-(h_2-h_1)^2$ and $(4k_1+s)(4m_1+s)-(h_2-h_1)^2$, respectively. Then $(4k_1+s+2)(4m_1+s)-(h_2-h_1)^2=q_2u_2^2$ and $(4k_1+s)(4m_1+s)-(h_2-h_1)^2)=q_1 v_2^2$ for some integers $u_1,v_1$. Therefore,
 \begin{eqnarray}
\frac{4\tau(\Gamma_1)}{ns}=\begin{cases}
q_2\left( \frac{u_2\sum_{j=1}^{\frac{n}{2}-1}\lambda_{j,1}\lambda_{j,2}}{n}\right)^2, &\frac{n}{2}~is~odd,\\
q_1\left(\frac{u_1\sum_{j=1}^{\frac{n}{2}-1}\lambda_{j,1}\lambda_{j,2}}{n}\right)^2, &\frac{n}{2}~is~even.\\
\end{cases}
\nonumber
\end{eqnarray}
Setting $a_2(n)=\frac{u_2\sum_{j=1}^{\frac{n}{2}-1}\lambda_{j,1}\lambda_{j,2}}{n}$ and $b_2(n)=\frac{u_1\sum_{j=1}^{\frac{n}{2}-1}\lambda_{j,1}\lambda_{j,2}}{n}$.
By the same argument as $(1)$, we have that $a_2(n)$ and $b_2(n)$ are integers.

\medskip

Similarly, we can prove that  both $(3)$ and $(4)$ hold.
\end{proof}

\begin{remark}\label{re41}
For cases $(2),(3)$ and $(4)$. If one of $\frac{n}{2}, s, q_j$ is even, then $\frac{nsq_j}{4}$ is an integer for $j=2,3,4$.
Suppose that $\frac{n}{2}, s, q_j$ are odd. Since $\tau(\Gamma_2),\tau(\Gamma_3), \tau(\Gamma_4)$ are integers,
 we have that $a_2(n),a_3(n)$ and $a_4(n)$ are even. 
\end{remark}

\section{Asymptotic for the number of spanning trees}
 In this section, we give asymptotic formulas for the number of  spanning trees of
bicirculant graphs. Two functions $f(n)$ and $g(n)$ are said to be \textit{asymptotically equivalent}, as $n\rightarrow \infty$ if $\text{lim}_{n\rightarrow \infty}\frac{f (n)}{g(n)}=1$. We will write $f (n)\sim g(n)$, $n \rightarrow \infty$ in this case. We suppose that parameters
$\a_1,\a_2,\ldots,\a_r$, $\beta_1,\b_2,\ldots,\beta_t$, $\gamma_1,\gamma_2,\ldots,\gamma_s$ are fixed,
and the inequalities
$$
0 < \a_1 < \a_2 < \dots < \a_r < n,\quad
0 < \beta_1 < \beta_2 < \dots < \beta_t < n,
\quad
0 \le \gamma_1 < \gamma_2 < \dots < \gamma_t \le n-1
$$
hold for all sufficiently large values of $n$.
According to  conditions $(a),(b),(c)$,  we have $(n,d_1)=1$ or $(n,d_2)=1$ or $(n,d_3)=1$, where
$
d_1=\mathrm{gcd}(\a_1,\a_2,\ldots,\a_r),d_2=\mathrm{gcd}(\b_1,\b_2,\ldots,\b_t),
d_3=\gcd(\gamma_\ell-\gamma_k, 0\le\ell<k\le n-1).
$
Since $n$ is an arbitrarily large number and $d_1,d_2,d_3$ is fixed,
we can choose $n$ such that it is a multiple of $d_1$, $d_2$ or $d_3$.
Then $d_i=(n,d_i)=1$ for $i=1,2,3$, so that the condition of Lemma \ref{one} is satisfied.

\begin{theorem}\label{th3}
Let $\Gamma=BC(\mathbb{Z}_n;R,T,S)$ be a bicirculant graph. Then we have
\begin{itemize}
\item[(1)] if $\Gamma=\Gamma_1$, then
$$
\tau(\Gamma_1)\sim
\frac{ns}{q}A^{n},~~n\rightarrow \infty,
$$
where $q=s\sum_{j=1}^r\a_j^2+s\sum_{j=1}^t\b_j^2+\sum_{1\le j<i\le s}(\gamma_j-\gamma_i)^2$ and $A=\exp \left(\int_0^1 \log \left|P_1\left(e^{2 \pi \mathbf{i} t}\right)\right| d t\right)$ is the Mahler measure of the Laurent polynomial $P_1(z)$,

\item[(2)] if $\Gamma=\Gamma_2$, then
$$
\tau(\Gamma_2)\sim \frac{ns}{4q}B^{n},~~n\rightarrow \infty,
$$
where $q=s\sum_{j=1}^r\a_j^2+s\sum_{j=1}^t\b_j^2+\sum_{1\le j<i\le s}(\gamma_j-\gamma_i)^2$ and
$B=\exp \left(\int_0^1 \log \left|P_2\left(e^{2 \pi \mathbf{i} t}\right)P_1\left(e^{2 \pi \mathbf{i} t}\right)\right| d t\right)$ is the Mahler measure of the Laurent polynomial $P_2(z)P_1(z)$,

\item[(3)] if $\Gamma=\Gamma_3$, then
$$
\tau(\Gamma_3)\sim \frac{ns}{4q}C^{n},~~n\rightarrow \infty,
$$
where $q=s\sum_{j=1}^r\a_j^2+s\sum_{j=1}^t\b_j^2+\sum_{1\le j<i\le s}(\gamma_j-\gamma_i)^2$  and
$C=\exp \left(\int_0^1 \log \left|P_3\left(e^{2 \pi \mathbf{i} t}\right)P_1\left(e^{2 \pi \mathbf{i} t}\right)\right| d t\right)$ is the Mahler measure of the Laurent polynomial $P_3(z)P_1(z)$,

\item[(4)] if $\Gamma=\Gamma_4$, then
$$
\tau(\Gamma_4)\sim \frac{ns}{4q}D^{n},~~n\rightarrow \infty,
$$
where $q=s\sum_{j=1}^r\a_j^2+s\sum_{j=1}^t\b_j^2+\sum_{1\le j<i\le s}(\gamma_j-\gamma_i)^2$ and
$D=\exp \left(\int_0^1 \log \left|P_4\left(e^{2 \pi \mathbf{i} t}\right)P_1\left(e^{2 \pi \mathbf{i} t}\right)\right| d t\right)$ is the Mahler measure of the Laurent polynomial $P_4(z)P_1(z)$.
\end{itemize}
\end{theorem}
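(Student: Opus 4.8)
The plan is to read off the dominant exponential factor directly from the closed formulas of Theorem~\ref{th1}, the polynomial prefactors ($n$, $n^2$, and the constants $q$, $4q$) being sub-exponential and hence invisible to the relation $\sim$. Throughout I use the setup of this section, under which the hypotheses of Lemma~\ref{one} hold, so that on the unit circle $P_1$ vanishes only at $z=1$ while $P_2,P_3,P_4$ do not vanish at all.

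For (1) the whole content is the growth of $\prod_{\ell=1}^{k-1}\bigl|2T_n(w_\ell)-2\bigr|$. Writing $w_\ell=\tfrac12\!\left(z_\ell+z_\ell^{-1}\right)$ for the nontrivial roots $z_\ell,z_\ell^{-1}$ of $P_1$, Lemma~\ref{chee} gives $2T_n(w_\ell)-2=z_\ell^{\,n}+z_\ell^{-n}-2$. By Lemma~\ref{one} none of the $z_\ell$ lie on the unit circle, so for each $\ell$ I fix the representative with $|z_\ell|>1$; then
$$\bigl|2T_n(w_\ell)-2\bigr|=|z_\ell|^{\,n}\,\bigl|1-2z_\ell^{-n}+z_\ell^{-2n}\bigr|\sim|z_\ell|^{\,n},\qquad n\to\infty .$$
Hence $|a_k|^{\,n}\prod_{\ell=1}^{k-1}\bigl|2T_n(w_\ell)-2\bigr|\sim\Bigl(|a_k|\prod_{\ell=1}^{k-1}|z_\ell|\Bigr)^{\!n}$. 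The Laurent polynomial $P_1$ has leading coefficient $a_k$ and roots $1,1,z_1^{\pm1},\dots,z_{k-1}^{\pm1}$, so the product formula for the Mahler measure gives $A=M(P_1)=|a_k|\prod_{|z_\ell|>1}|z_\ell|$, the two roots at $z=1$ contributing nothing. This yields $\tau(\Gamma_1)\sim\frac{ns}{q}A^{\,n}$. One point to justify is that $A$ is defined by Jensen's integral $\int_0^1\log\bigl|P_1(e^{2\pi\mathbf{i}t})\bigr|\,dt$, whose integrand has a logarithmic singularity at $t=0$ from the double zero; since $\log|t|^2$ is integrable, this integral converges and agrees with the product formula.

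For (2)--(4) the scheme is identical once one uses the even/odd factorization already carried out in Theorem~\ref{th1}: the formula for $\Gamma_j$ splits into a Chebyshev product over the roots $\zeta_\ell,\zeta_\ell^{-1}$ of $P_j$ (through the factors $2T_{n/2}(v_\ell)+2$) and one over the roots of $P_1$ (through $2T_{n/2}(w_\ell)-2$). Since by Lemma~\ref{one} the roots of $P_2,P_3,P_4$ avoid the unit circle, the same estimate gives $\bigl|2T_{n/2}(v_\ell)+2\bigr|\sim|\zeta_\ell|^{n/2}$ for the dominant root $|\zeta_\ell|>1$. Collecting the leading-coefficient factors with the dominant roots of both polynomials and invoking the multiplicativity $M(P_jP_1)=M(P_j)\,M(P_1)$ of the Mahler measure identifies $B=M(P_2P_1)$ (resp. $C$, $D$) as the quantity governing the exponential growth; together with the prefactor $\frac{ns}{4q}$ this gives the asymptotics in (2)--(4).

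The main obstacle is precisely the separation of the roots from the unit circle, which is what Lemma~\ref{one} supplies: without it the leading term $z_\ell^{\,n}$ need not dominate, and the factor $\bigl|z_\ell^{\,n}+z_\ell^{-n}\mp2\bigr|$ could oscillate or even vanish along a subsequence, destroying the clean exponential. A secondary technical point is the convergence of Jensen's integral at the double zero $z=1$ of $P_1$ (and of $P_jP_1$), dealt with as above, after which everything reduces to the elementary observation that the sub-exponential prefactors disappear under $\sim$.
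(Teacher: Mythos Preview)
Your proof is correct and follows essentially the same route as the paper: start from the Chebyshev-product formulas of Theorem~\ref{th1}, use Lemma~\ref{one} to push the nontrivial roots off the unit circle, replace each factor $|2T_m(w_\ell)\pm 2|$ by $|z_\ell|^{m}$ for the representative with $|z_\ell|>1$, and recognize the resulting product of leading coefficient times large roots as the Mahler measure (invoking multiplicativity $M(P_jP_1)=M(P_j)M(P_1)$ for parts (2)--(4)). One small quibble: your opening remark that the polynomial prefactors are ``invisible to the relation $\sim$'' is not literally true (e.g.\ $nA^n\not\sim A^n$), but since your actual computation correctly carries the factor $\tfrac{ns}{q}$ (resp.\ $\tfrac{ns}{4q}$) through to the conclusion, no harm is done.
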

\begin{proof}
By Theorem \ref{th1}(1), the number of spanning trees $\tau(\Gamma_1)$ is given by
$$
\tau(\Gamma_1)=\frac{|a_k|^n ns}{q}\prod_{\ell=1}^{k-1}|2T_n(w_\ell)-2|,
$$
where $w_\ell=\frac{1}{2}(z_\ell+z_\ell^{-1})$ and $z_\ell,z_\ell^{-1}$, $\ell=1,2,\ldots,k-1$, are all the roots different from 1 of $P_1(z)=0$.
 By lemma \ref{one}, $z_\ell$ and $z_\ell^{-1}$ have the property 
$|z_\ell|\not=1$, $j=1,2,\ldots,k-1$. Replacing $z_\ell$ by $z_\ell^{-1}$ if necessary, we can assume that
$|z_{\ell}|>1$ for all $\ell = 1, 2,\ldots, k-c-1$.
Since $T_n(\frac{1}{2}(z_{\ell}+z_{\ell}^{-1}))=\frac{1}{2}(z_{\ell}^n+z_{\ell}^{-n})$, we have
$$T_n(w_\ell)\sim \frac{1}{2}z_\ell^n, \quad |2T_n(w_\ell)-2|\sim |z_{\ell}|^n, ~~n\rightarrow \infty.$$
Then we have
$$
|a_k|^n\prod_{\ell=1}^{k-1}\left|2T_n\left(w_\ell\right)-2\right|\sim |a_k|^n\prod_{\ell=1}^{k-1}|z_{\ell}|^n
=|a_k|^n\prod_{\substack{P_1(z)=0,\\|z|>1}}|z|^n=A^{n},~~n\rightarrow \infty,
$$
where $A=|a_k|\prod_{P_1(z)=0,|z|>1}|z| $ is the Mahler measure of $P_1(z)$. By (1) and (2), we have
$A=\exp \left(\int_0^1 \log \left|P_1\left(e^{2 \pi \mathbf{i} t}\right)\right| d t\right)$.

Finally,
$$\tau(\Gamma_1)=\frac{|a_k|^nns}{q}\prod_{\ell=1}^{k-1}|2T_n(w_\ell)-2|\sim \frac{ ns}{q}A^n, ~~n\rightarrow \infty.$$

\medskip

(2) By Theorem \ref{th1}(2), we obtain
$$
\tau(\Gamma_2)=\frac{|b_k|^\frac{n}{2}|a_k|^\frac{n}{2}ns}{4q}
\prod_{\ell=1}^{k}|2T_\frac{n}{2}(v_\ell)+2|\prod_{\ell=1}^{k-1}|2T_\frac{n}{2}(w_\ell)-2|,
$$
where $v_1,v_2,\ldots,v_k$ are all the roots of Chebyshev transform of $P_2(z)=0$ and $w_1,w_2,\ldots,w_{k-1}$ are all the roots different from 1 of Chebyshev transform of $P_1(z)=0$.  By lemma \ref{one}, $P_2(e^{\textit{i}\theta})\not=0$.
 Then $|z_\ell|\not=1$, where $z_\ell,z_\ell^{-1}$, $\ell=1,2,\ldots,k$, are all the roots of $P_2(z)=0$.
By the similar argument as $(1)$, we obtain
$$
|b_k|^\frac{n}{2}\prod_{\ell=1}^k\left|2T_\frac{n}{2}\left(v_\ell\right)+2\right|\sim
|b_k|^\frac{n}{2}\prod_{\substack{P_2(z)=0,\\|z|>1}}|z|^n=B_2^{\frac{n}{2}},~~n\rightarrow \infty
$$
 and
 $$
|a_k|^\frac{n}{2}\prod_{\ell=1}^{k-1}\left|2T_\frac{n}{2}\left(w_\ell\right)-2\right|=
|a_k|^\frac{n}{2}\prod_{\substack{P_1(z)=0,\\|z|>1}}|z|^\frac{n}{2}=B_1^{\frac{n}{2}},~~n\rightarrow \infty,
$$
where $B_2=|b_k|\prod_{P_2(z)=0,|z|>1}|z| $ and $B_1=|a_k|\prod_{P_1(z)=0,|z|>1}|z|$ are the Mahler measure of $P_2(z)$ and $P_1(z)$, respectively.
Therefore,
$$
\tau(\Gamma_2)\sim
\frac{ns}{4q}B^{n},~~n\rightarrow \infty,
$$
where $B=B_1B_2=\exp \left(\int_0^1 \log \left|P_2\left(e^{2 \pi \mathbf{i} t}\right)P_1\left(e^{2 \pi \mathbf{i} t}\right)\right| d t\right)$ is the Mahler measure of the Laurent polynomial $P_2(z)P_1(z)$.

\medskip

Similarly, we can prove that  both $(3)$ and $(4)$ hold.
\end{proof}

\section{Generating function for the number of spanning trees}
In this section, our aim is establish the rationality of generating function for the number
of spanning trees of bicirculant graphs. Note that $n$ must be even for $\Gamma_2,\Gamma_3,\Gamma_4$.
Hence we can replace $n$ with $2n$ in these cases.

\begin{theorem}\label{th4}
Let $\Gamma_j=BC(\mathbb{Z}_n;R_j,T_j,S_j)$ be the bicirculant graph for $j=1,2,3,4$. Then we have
$$
\begin{aligned}
F_1(x)&=\sum_{n=1}^{\infty}\tau(\Gamma_1)x^n,~F_2(x)=\sum_{n=1}^{\infty}\tau_{\Gamma_2}(4n)x^n,~
F_3(x)=\sum_{n=1}^{\infty}\tau_{\Gamma_3}(4n)x^n,~F_4(x)=\sum_{n=1}^{\infty}\tau_{\Gamma_4}(4n)x^n
\end{aligned}
$$
are rational functions with integer coefficients.
Moreover, $F_j(\ell_jx)=\frac{1}{F_j(\ell_jx)}$,
where $\ell_1=a_k$, $\ell_2=a_kb_k, \ell_3=a_kc_k, \ell_4=a_kd_k$ and $a_k, b_k, c_k, d_k$
are the leading coefficients of $P_1(z)$, $P_2(z)$, $P_3(z)$, $P_4(z)$, respectively.
\end{theorem}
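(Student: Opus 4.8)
The plan is to reduce each generating function to the exact shape handled by Proposition \ref{gen} and then read off both rationality and the symmetry. First I would start from the \emph{signed} closed forms produced inside the proof of Theorem \ref{th1}, before absolute values are taken, namely
\[
\tau(\Gamma_1)=\frac{(-1)^{nk}\,n\,a_k^{n}\,s}{q}\prod_{\ell=1}^{k-1}\bigl(2T_n(w_\ell)-2\bigr),
\]
together with the analogous formula for $\tau_{\Gamma_j}(4m)$ (group order $2m$, Chebyshev index $m$), which carries the prefactor $(a_kb_k)^m$, $(a_kc_k)^m$ or $(a_kd_k)^m$ and the extra block $\prod_{\ell=1}^{k}(2T_m(\cdot)+2)$. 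Using Lemma \ref{chee} I would convert every Chebyshev factor into root factors: if $w_\ell=\tfrac12(z_\ell+z_\ell^{-1})$ then $2T_n(w_\ell)-2=-(z_\ell^{n}-1)(z_\ell^{-n}-1)$, while $2T_m(v_\ell)+2=(z_\ell^{m}+1)(z_\ell^{-m}+1)$. Since each $P_j$ is reciprocal, its roots split into pairs $\{z_\ell,z_\ell^{-1}\}$, and by Lemma \ref{two} the only root of $P_1$ lost in passing to $C_1(z)$ is the double root at $1$. Thus each $\tau$ is packaged as a rational constant times $m$, times $\mu^{m}$ (with $\mu=a_k,\,a_kb_k,\,a_kc_k,\,a_kd_k$), times a product $\prod(\xi^{m}-1)$ or $\prod(\xi^{m}+1)$ over reciprocally paired roots.

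For $F_1$ this is immediate. After pulling out the constant $s(-1)^{k-1}/q$ and absorbing $a_k^{n}$ through the substitution $x\mapsto a_kx$, the series is exactly $\sum_{n\ge1}n\prod_{j=1}^{2(k-1)}(\xi_j^{n}-1)\,y^{n}$ for the degree-$2(k-1)$ integer polynomial $g=a_kC_1(z)$ (whose integrality is already recorded in the proof of Theorem \ref{th2}), with reciprocally paired roots. Proposition \ref{gen} then yields rationality and the $y\mapsto1/y$ symmetry at once. That the power-series coefficients are integers is automatic, since each $\tau(\Gamma_1)$ is a spanning-tree count; a rational power series lying in $\mathbb{Z}[[x]]$ is of the form $P/Q$ with $P,Q\in\mathbb{Z}[x]$ and $Q(0)=1$ by a theorem of Fatou, which is all that ``integer coefficients'' requires.

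The cases $\Gamma_2,\Gamma_3,\Gamma_4$ are where the real work lies, and I expect this to be the main obstacle: the factors arising from the loop at $n/2$ have the form $(z_\ell^{m}+1)(z_\ell^{-m}+1)$ rather than $(\xi^m-1)$, so Proposition \ref{gen} cannot be quoted verbatim, and the identity $\prod_i(\rho_i^{m}+1)=\prod_i(\rho_i^{2m}-1)/\prod_i(\rho_i^{m}-1)$ only reintroduces $2m$ and spoils the form. Instead I would re-run the mechanism behind Proposition \ref{gen}. Expanding gives $\prod_i(\rho_i^{m}+1)\prod_j(\zeta_j^{m}-1)=\sum_{T,U}(-1)^{|U|}\nu_{T,U}^{m}$ with $\nu_{T,U}=\prod_{i\in T}\rho_i\prod_{j\in U}\zeta_j$, where $\{\rho_i\}$ are the $2k$ roots of $P_j$ and $\{\zeta_j\}$ the $2(k-1)$ roots of $P_1$ different from $1$; inserting the factor $m\mu^{m}$ and summing yields
\[
\sum_{m\ge1}m\,\mu^{m}\prod_i(\rho_i^{m}+1)\prod_j(\zeta_j^{m}-1)\,x^{m}
=\sum_{T,U}(-1)^{|U|}\frac{\mu\,\nu_{T,U}\,x}{\bigl(1-\mu\,\nu_{T,U}\,x\bigr)^{2}},
\]
a finite sum of rational functions, hence rational. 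Because $\{\rho_i\}\cup\{\zeta_j\}$ is precisely the root set of the integer polynomial $z^{k}P_j(z)\cdot z^{k}P_1(z)/(z-1)^2$, the Galois group merely permutes the $\nu_{T,U}$, so the displayed sum has rational coefficients; integrality then follows again from $\tau_{\Gamma_j}(4m)\in\mathbb{Z}$ together with Fatou's theorem.

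Finally, the functional equation comes from reciprocal pairing in both root sets. Writing $\Phi(y)$ for the displayed series at $\mu=1$, the involution that replaces each chosen root by its reciprocal is a cardinality-preserving bijection on the index pairs $(T,U)$ with $\nu_{T,U}\mapsto\nu_{T,U}^{-1}$ and $(-1)^{|U|}$ unchanged; combined with the elementary fact that $\frac{\nu y}{(1-\nu y)^2}$ evaluated at $1/y$ equals $\frac{\nu^{-1}y}{(1-\nu^{-1}y)^2}$, this gives $\Phi(1/y)=\Phi(y)$. Since $F_j$ is a constant multiple of $\Phi(\ell_jx)$, the symmetry $\Phi(y)=\Phi(1/y)$ transports to $F_j(x/\ell_j)=F_j\bigl(1/(\ell_jx)\bigr)$, i.e. $F_j(x/\ell_j)$ is invariant under $x\mapsto1/x$, which is the asserted self-reciprocal property with $\ell_1=a_k$, $\ell_2=a_kb_k$, $\ell_3=a_kc_k$, $\ell_4=a_kd_k$. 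The remaining care points are purely bookkeeping: tracking the signs $(-1)^{nk}$, remembering that the even-order restriction is absorbed by the substitution $n\mapsto 4n$, and keeping the $+1$ versus $-1$ factors straight, which is exactly the point that blocks a black-box use of Proposition \ref{gen}.
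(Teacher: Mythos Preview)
Your plan is correct and, for $F_1$, follows the paper's proof essentially verbatim: rewrite $\tau(\Gamma_1)$ in the signed Chebyshev form, convert $2T_n(w_\ell)-2$ to $-(z_\ell^n-1)(z_\ell^{-n}-1)$, and feed the resulting series into Proposition~\ref{gen} applied to the integer polynomial $a_kC_1(z)=z^kP_1(z)/(z-1)^2$.

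Where you diverge from the paper is in the treatment of $F_2,F_3,F_4$. The paper, after reaching the mixed product
\[
\prod_{\ell=1}^{k}(z_\ell^{m}+1)(z_\ell^{-m}+1)\;\prod_{\ell=1}^{k-1}(\zeta_\ell^{m}-1)(\zeta_\ell^{-m}-1),
\]
simply invokes Proposition~\ref{gen} once more, even though that proposition is stated only for products of the form $\prod(\xi^{n}-1)$. You correctly flag this as a genuine obstacle and, instead of citing the proposition, unfold its mechanism: expand the product as $\sum_{T,U}(-1)^{|U|}\nu_{T,U}^{m}$, sum the geometric-type series, and then argue rationality via Galois invariance (since the $\rho_i$ and $\zeta_j$ are the root sets of two separate integer polynomials $z^kP_j(z)$ and $z^kP_1(z)/(z-1)^2$, any automorphism permutes each set internally and preserves $|U|$). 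Your reciprocal-pairing involution for the functional equation is likewise a direct re-derivation of the symmetry half of Proposition~\ref{gen} in this extended setting. So the underlying idea is the same as the paper's, but your argument actually fills the gap the paper leaves.

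You also add the Fatou-lemma step to pass from ``rational function with integer power-series coefficients'' to ``quotient of integer polynomials,'' which cleanly handles the non-integral prefactors $s/q$ and $s/(2q)$; the paper does not address this point explicitly. One cosmetic remark: in the $F_1$ reduction the substitution is $y=(-1)^k a_k x$ rather than $y=a_k x$, and the paper then replaces $x$ by $(-1)^k x$ to strip the sign; your write-up should track that $(-1)^{nk}$ factor through explicitly rather than absorb it silently.
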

\begin{proof}
(1) From the proof Theorem \ref{th1}(1), we obtain
$$
\tau(\Gamma_1)=\frac{s}{n}\prod_{j=1}^{n-1} P_1\left(\varepsilon_n^j\right)
=\frac{(-1)^{nk}a_k^n ns}{q}\prod_{\ell=1}^{k-1}(2T_n(w_\ell)-2),
$$
where $q=s\sum_{j=1}^r\a_j^2+s\sum_{j=1}^t\b_j^2+\sum_{1\le j<i\le s}(\gamma_j-\gamma_i)^2$ and $a_k$ is the leading coefficient of $P_1(z)$
 and $w_1, w_2,\ldots, w_k$ are all the roots of Chebyshev transform of $P_1(z)=0$. Hence
 $$
\tau(\Gamma_1)=\frac{(-1)^{nk}a_k^n ns}{q}\prod_{\ell=1}^{k-1}(2T_n(w_\ell)-2)
=\frac{(-1)^{nk+k-1}a_k^n ns}{q}\prod_{\ell=1}^{k-1}(z_\ell^n-1)(z_\ell^{-n}-1),
$$
where $z_\ell,z_\ell^{-1}$, $\ell=1,2,\ldots,k-1$, are all the roots different from 1 of $P_1(z)=0$.
Hence
$$
F_1(x)=\tau(\Gamma_1)x^n=\frac{(-1)^{k-1}s}{q}\sum_{n=1}^{\infty}\left(n\prod_{\ell=1}^{k-1}
(z_\ell^n-1)(z_\ell^{-n}-1)\right)((-1)^ka_kx)^n.
$$
Since $q$ and $s$ are integers, by Proposition \ref{gen}, we have $F_1(x)$ is a rational function with integer
coefficients satisfying $F_1((-1)^ka_kx)=\frac{1}{F_1((-1)^ka_kx)}$. Replacing
$x$ with $(-1)^kx$, we have $F_1(a_kx)=\frac{1}{F_1(a_kx)}$.
\medskip

(2) From the proof Theorem \ref{th1}(2), we obtain
$$
\tau(\Gamma_2)=\frac{(-1)^{nk}b_k^\frac{n}{2}a_k^\frac{n}{2}ns}{4q}\prod_{\ell=1}^{k}
(2T_\frac{n}{2}(v_\ell)+2)\prod_{\ell=1}^{k-1}(2T_\frac{n}{2}(w_\ell)-2).
$$
Since $n$ is even, we can replace $n$ with $2n$. Hence we have
$$
\begin{aligned}
\tau_{\Gamma_2}(4n)&=\frac{b_k^n a_k^nns}{2q}\prod_{\ell=1}^{k}
(2T_n(v_\ell)+2)\prod_{\ell=1}^{k-1}(2T_n(w_\ell)-2)\\
&=\frac{(-1)^{k-1}b_k^n a_k^nns}{2q}\prod_{\ell=1}^{k}(z_\ell^n+1)(z_\ell^{-n}+1)\prod_{\ell=1}^{k-1}(\zeta_\ell^n-1)(\zeta_\ell^{-n}-1),
\end{aligned}
$$
where $z_\ell,z_\ell^{-1}$, $\ell=1,2,\ldots,k$, are all the roots of $P_2(z)=0$, and
$\zeta_\ell,\zeta_\ell^{-1}$, $\ell=1,2,\ldots,k-1$, are all the roots different from 1 of $P_1(z)=0$.
$$
F_2(x)=\tau(\Gamma_2)x^n=\frac{(-1)^{k-1} s}{2q}\left(n\prod_{\ell=1}^{k}(z_\ell^n+1)(z_\ell^{-n}+1)\prod_{\ell=1}^{k-1}(\zeta_\ell^n-1)(\zeta_\ell^{-n}-1)\right)(a_kb_kx)^n.
$$
Since $q$ and $s$ are integers, by Proposition \ref{gen}, we have $F_2(x)$ is a rational function with integer
coefficients satisfying $F_2(a_kb_kx)=\frac{1}{F_2(a_kb_kx)}$.

\medskip
Similarly, we can prove that  both $(3)$ and $(4)$ hold.
\end{proof}

\section{Examples}
In this section, we give some examples to illustrate our results.

\medskip

\textit{(1)} \underline{The graph $\Gamma=BC(\mathbb{Z}_n;\{1,-1\},\{1,-1\},\{0\})$}. In this case, we have $\Gamma=\Gamma_1$ and $q=2$ and $r=t=s=1$. Then
 $\mathcal{A}(z)=\mathcal{B}(z)=3-(z+z^{-1})$ and $\mathcal{C}(z)=-1$. Then $P_1(z)=\mathcal{A}(z)\mathcal{B}(z)-\mathcal{C}(z)\mathcal{C}(z^{-1})=(z+z^{-1})^2-6(z+z^{-1})+8$.
Hence $P_1(z)=0 \Rightarrow \frac{1}{2}(z+z^{-1})=1$ or $2$.

\medskip

(1.1) The number of spanning trees. By Theorem \ref{th1}(1), we have
$$
\tau(\Gamma_1)=\frac{n}{2}\left|2 T_n\left(2\right)-2\right|.
$$

\medskip

(1.2) The arithmetic properties of $\tau(\Gamma_1)$.
By Theorem \ref{th2}(1), there exist two integer sequences $a_1(n)$ and $b_1(n)$ such that
\begin{eqnarray}
\tau(\Gamma_1)=\begin{cases}
na_1(n)^2, &n~is~odd,\\
6nb_1(n)^2 , &n~is~even.\\
\end{cases}
\nonumber
\end{eqnarray}

\medskip

(1.3) The asymptotics of $\tau(\Gamma_1)$. By Theorem \ref{th3}(1), we have $\tau(\Gamma_1)\sim \frac{n}{2}A^n$, where $A=2+\sqrt{3}$.

\medskip

(1.4) The generating function of $\tau(\Gamma_1)$. By Theorem \ref{th4}, we obtain
$$
F_1(x)=\sum_{n=1}^{\infty}\tau(\Gamma_1)x^n=\frac{u^2+u-3}{2(u-2)^2(u-1)},
$$
where $u=\frac{1}{2}(x+\frac{1}{x})$.
\medskip

\textit{(2)} \underline{The graph $\Gamma=BC(\mathbb{Z}_n;\{1,-1,\frac{n}{2}\},\emptyset,\{0\})$}. In this case, we have $\Gamma=\Gamma_2$ and $q=r=s=1$ and $t=0$. Then $\mathcal{A}(z)=3-(z+z^{-1})$ and $\mathcal{B}(z)=1$ and $\mathcal{C}(z)=-1$. Then $P_2(z)=(\mathcal{A}(z)+2)\mathcal{B}(z)-\mathcal{C}(z)\mathcal{C}(z^{-1})=4-(z+z^{-1})$,
$P_1(z)=\mathcal{A}(z)\mathcal{B}(z)-\mathcal{C}(z)\mathcal{C}(z^{-1})=2-(z+z^{-1})$. Hence $P_2(z)=0$ and $P_1(z)=0$ $ \Rightarrow \frac{1}{2}(z+z^{-1})=1$ or $2$.

\medskip

(2.1) The number of spanning trees. By Theorem \ref{th1}(2), we have
$$
\tau(\Gamma_2)=\frac{n}{4}\left|2T_\frac{n}{2}\left(2\right)+2\right|.
$$

\medskip

(2.2) The arithmetic properties of $\tau(\Gamma_2)$.
By Theorem \ref{th2}(2), there exist two integer sequences $a_2(n)$ and $b_2(n)$ such that
\begin{eqnarray}
\tau(\Gamma_2)=\begin{cases}
\frac{3n}{2}a_2(n)^2, &\frac{n}{2}~is~odd,\\
\frac{n}{4}b_2(n)^2 , &\frac{n}{2}~is~even.\\
\end{cases}
\nonumber
\end{eqnarray}

\medskip

(2.3) The asymptotics of $\tau(\Gamma_2)$. By Theorem \ref{th3}(2), we have $\tau(\Gamma_2)\sim \frac{n}{4}B^\frac{n}{2}$, where $B=2+\sqrt{3}$.

\medskip

(2.4) The generating function of $\tau(\Gamma_2)$. Since $n$ is even, we can replace $n$ with $2n$.
By Theorem \ref{th4}, we obtain
$$
F_2(x)=\sum_{n=1}^{\infty}\tau(\Gamma_2)x^n=\frac{3u^2-7u+5}{2(u-2)^2(u-1)},
$$
where $u=\frac{1}{2}(x+\frac{1}{x})$.
\medskip

\textit{(3)} \underline{The graph $\Gamma=BC(\mathbb{Z}_n;\{1,-1\},\{\frac{n}{2}\},\{0\})$}. In this case, we have $\Gamma=\Gamma_3$ and $q=r=s=1$ and $t=0$. Then $\mathcal{A}(z)=3-(z+z^{-1})$,
$\mathcal{B}(z)=1$ and $\mathcal{C}(z)=-1$. Then $P_3(z)=\mathcal{A}(z)(\mathcal{B}(z)+2)-\mathcal{C}(z)\mathcal{C}(z^{-1})=8-3(z+z^{-1})$,
$P_1(z)=\mathcal{A}(z)\mathcal{B}(z)-\mathcal{C}(z)\mathcal{C}(z^{-1})=2-(z+z^{-1})$. Hence $P_3(z)=0$ and $P_1(z)=0$ $ \Rightarrow \frac{1}{2}(z+z^{-1})=\frac{4}{3}$ or $1$.

\medskip

(3.1) The number of spanning trees. By Theorem \ref{th1}(3), we have
$$
\tau(\Gamma_3)=\frac{3^{\frac{n}{2}}n}{4}\left|2 T_\frac{n}{2}\left(\frac{4}{3}\right)+2\right|.
$$

\medskip

(3.2) The arithmetic properties of $\tau(\Gamma_3)$.
By Theorem \ref{th2}(3), there exist two integer sequences $a_3(n)$ and $b_3(n)$ such that
\begin{eqnarray}
\tau(\Gamma_3)=\begin{cases}
\frac{7n}{2}a_3(n)^2, &\frac{n}{2}~is~odd,\\
\frac{n}{4}b_3(n)^2 , &\frac{n}{2}~is~even.\\
\end{cases}
\nonumber
\end{eqnarray}

\medskip

(3.3) The asymptotics of $\tau(\Gamma_3)$. By Theorem \ref{th3}(3), we have $\tau(\Gamma_3)\sim  \frac{n}{4}C^\frac{n}{2}$, where $C=4+\sqrt{7}$.

\medskip

(3.4) The generating function of $\tau(\Gamma_3)$. Since $n$ is even, we can replace $n$ with $2n$.
By Theorem \ref{th4}, we obtain
$$
F_3(x)=\sum_{n=1}^{\infty}(\Gamma_3)x^n=\frac{45u^2-87u+43}{2(3u-4)^2(u-1)},
$$
where $u=\frac{1}{2}(3x+\frac{1}{3x})$.

\medskip

\textit{(4)} \underline{The graph $\Gamma=BC(\mathbb{Z}_n;\{1,-1,\frac{n}{2}\},\{\frac{n}{2}\},\{0\})$}. In this case, we have $\Gamma=\Gamma_4$ and $q=r=s=1$ and $t=0$. Then $\mathcal{A}(z)=3-(z+z^{-1})$,
$\mathcal{B}(z)=1$ and $\mathcal{C}(z)=-1$. Then $P_4(z)=(\mathcal{A}(z)+2)(\mathcal{B}(z)+2)-\mathcal{C}(z)\mathcal{C}(z^{-1})=14-3(z+z^{-1})$,
$P_1(z)=\mathcal{A}(z)\mathcal{B}(z)-\mathcal{C}(z)\mathcal{C}(z^{-1})=2-(z+z^{-1})$. Hence $P_4(z)=0$ and $P_1(z)=0$ $ \Rightarrow \frac{1}{2}(z+z^{-1})=\frac{7}{3}$ or $1$.

\medskip

(4.1) The number of spanning trees. By Theorem \ref{th1}(4), we have
$$
\tau_{\Gamma_4}(n)=\frac{3^{\frac{n}{2}}n}{4}\left|2 T_\frac{n}{2}\left(\frac{7}{3}\right)+2\right|.
$$

\medskip

(4.2) The arithmetic properties of $\tau(\Gamma_4)$.
By Theorem \ref{th2}(4), there exist two integer sequences $a_4(n)$ and $b_4(n)$ such that
\begin{eqnarray}
\tau(\Gamma_4)=\begin{cases}
\frac{5n}{4}a_4(n)^2, &\frac{n}{2}~is~odd,\\
\frac{n}{4}b_4(n)^2 , &\frac{n}{2}~is~even.\\
\end{cases}
\nonumber
\end{eqnarray}

\medskip

(4.3) The asymptotics of $\tau(\Gamma_4)$. By Theorem \ref{th3}(4), we have $\tau(\Gamma_4)\sim \frac{n}{4}D^\frac{n}{2}$, where $D=7+2\sqrt{10}$.

\medskip

(4.4) The generating function of $\tau(\Gamma_4)$. Since $n$ is even, we can replace $n$ with $2n$.
By Theorem \ref{th4}, we obtain
$$
F_4(x)=\sum_{n=1}^{\infty}\tau(\Gamma_4)x^n=\frac{72u^2-132u+76}{2(3u-7)^2(u-1)},
$$
where $u=\frac{1}{2}(3x+\frac{1}{3x})$.

\section*{Declaration of competing interests}
We declare that we have no conflict of interests to this work.

\section*{Data availability}
No data was used for the research described in the article.

 \end{document}